 \patchcmd\Gread@eps{\@inputcheck#1 }{\@inputcheck"#1"\relax}{}{}
\newtheorem{prealg}{{\bf Algorithm}}
\newtheorem{prethm}{{\bf Theorem}}
\newenvironment{theorem}{\begin{prethm}{\hspace{-0.5
em}{\bf.}}}{\end{prethm}}
\newtheorem{preex}{{\bf Example}}
\newtheorem{prepr}{{\bf Problem}}
\newtheorem{precon}{{\bf Conjecture}}
\newenvironment{conjecture}{\begin{precon}{\hspace{-0.5
em}{\bf.}}}{\end{precon}}
\newtheorem{prelem}{{\bf Lemma}}
\newenvironment{lemma}{\begin{prelem}{\hspace{-0.5
em}{\bf.}}}{\end{prelem}}
\newtheorem{precor}{{\bf Corollary}}
\newtheorem{prepos}{{\bf Proposition}}
\newtheorem{preobv}{{\bf Observation}}
\newtheorem{predef}{{\bf Definition}}
\newtheorem{preproof}{{\bf Proof.}}
\newenvironment{proof}[1]{\begin{preproof}{\rm
               #1}\hfill{$\rule{2mm}{2mm}$}}{\end{preproof}}
               \newtheorem{preprooft}{{\bf Proof of Theorem 1.}}
\begin{document}
\renewcommand{\baselinestretch}{1.3}
%\today
\title{{\Large\bf  Star edge coloring of generalized Petersen graphs }}

{\small
\author{
{\sc Behnaz Omoomi$^a$} %\footnote{bomoomi@cc.iut.ac.ir}
\ and
{\sc Marzieh Vahid Dastjerdi$^b$}%\footnote{m.vahiddastjerdi@math.iut.ac.ir}
\\ [1mm]
{\small \it  $^a$ Department of Mathematical Sciences}\\
{\small \it  Isfahan University of Technology} \\
{\small \it 84156-83111, \ Isfahan, Iran}\\
{\small \it  $^b$ Research Center for Development of Advanced Technologies}\\
{\small \it  K. N. Toosi University of Technology} \\
{\small \it Tehran, Iran}}
\maketitle
\begin{abstract}
\noindent 
%A $k$-star edge coloring of a graph $G$ is a proper edge coloring of $G$ with at most $k$ colors such that  every path and cycle of length four  uses at least three different colors. 
The star chromatic index of  a graph $G$, denoted by $\chi^\prime_s(G)$, is the smallest integer $k$ for which $G$ admits a proper edge coloring with $k$ colors such that  every path and cycle of length four  is not bicolored.  Let $d$ be the greatest common divisor of $n$ and $k$. 
Zhu~et~al. (\footnotesize{Discussiones Mathematicae: Graph Theory, 41(2): 1265, 2021}) showed that for every integers $k$ and $n> 2k$ with $d\geq 3$, generalized Petersen graph $GP(n,k)$ admits a 5-star edge coloring,  with the exception of the case that $d = 3$, $k\neq d$  and $\frac{n}{3}= 1\pmod{3}$.
Also, they conjectured that for every $n>2k$, $\chi^\prime_s(GP(n,k))\leq 5$, except $GP(3,1)$. 
In this paper, we prove that for every $GP(n,k)$ with $n\geq 2k$ and  $d\geq 3$ their conjecture is true. In fact, we  provide a 5-star edge coloring of $GP(n,k)$, where $n\geq 2k$ and  $d\geq 3$. We also obtain some results for 5-star edge coloring of $GP(n,k)$ with   $d=2$.
Moreover, Dvo{\v{r}}{\'a}k et al. ({\footnotesize Journal of Graph Theory, 72(3):313-326, 2013}) conjectured that the star of chromatic index of  subcubic graphs is at most 6. 
Thus, our results also prove this conjecture for the generalized Petersen graphs, as a class of subcubic graphs.
 
%Thus,  we extend the proof of Zhu et al.   and also cover their exception. 
%Moreover, generalized Petersen graphs are a  class of cubic graph which confirm the conjecture of Dvo{\v{r}}{\'a}k et al. (Star chromatic index. Journal of Graph Theory, 72(3):313-326, 2013) which states that the star of chromatic index of  cubic graphs is at most 6.
\par
\noindent {\bf Keywords:} star edge coloring, star chromatic index, generalized Petersen graphs, subcubic graphs.\\
\noindent {\bf 2010 MSC:} 05C15.
\end{abstract}

\section{Introduction}
A {\it proper edge coloring} of a graph $G$, is an assignment of colors to the edges of $G$ such that every two adjacent edges receive different colors.
 Under additional constraints on the proper  edge coloring, we get a variety of edge coloring, namely star edge coloring. 
In 2008, Liu and Deng~\cite{delta} introduced the concept of  star edge coloring.
 A {\it k-star edge coloring} of a graph is a proper edge coloring with at~most $k$ colors such that every path and cycle with four edges are not bicolored. 
 The smallest integer $k$ for which $G$ admits a $k$-star edge coloring is called the {\it star chromatic index} of $G$ and is denoted by $\chi^\prime_s(G)$~\cite{delta}.

Dvo{\v{r}}{\'a}k et al.~\cite{mohar} presented some upper bounds and lower
bounds on the star chromatic index of complete graphs and  derived a near linear upper bound in terms
of the maximum degree $\Delta$ for general graphs. Also, they showed that the
star chromatic index of cubic graphs lies between 4 and 7.  Moreover, they 
proposed the following conjecture.
\begin{conjecture} \rm{\cite{mohar}} \label{conj}
If $G$ is a subcubic graph \rm{(}graph with maximum degree three\rm{)}, then $\chi^\prime_s(G)\leq 6$. 
\end{conjecture}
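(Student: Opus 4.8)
The plan is to attack the statement in its full generality by a minimal-counterexample argument combined with a reducible-configuration analysis, using as context the bound $\chi^\prime_s(G)\le 7$ already established by Dvo{\v{r}}{\'a}k et al.\ for cubic graphs (so that the gap to be closed is a single colour). Suppose for contradiction that the class of subcubic graphs with $\chi^\prime_s(G)>6$ is nonempty, and let $G$ be a counterexample minimising $|V(G)|+|E(G)|$. By minimality, every subcubic graph with fewer vertices-plus-edges admits a $6$-star edge coloring; the strategy is to identify local structures (``reducible configurations'') such that a $6$-star coloring of a smaller graph obtained by deleting or contracting part of the configuration can always be extended to $G$. The failure of every such extension is what will force $G$ to avoid all these configurations, and the final contradiction will come from showing that no subcubic graph can avoid them all.

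First I would clear out all vertices of degree at most two. A vertex of degree $0$ or $1$ is removed together with its incident edge; after $6$-star-colouring $G-uv$, the pendant edge $uv$ is constrained only by the at most two edges at $v$ together with the bicolored-path/cycle condition, and a short count over six colours shows a valid colour remains, contradicting minimality. Degree-two vertices are eliminated by suppression of an incident edge or by a local recolouring argument along the edges meeting the two relevant paths. The upshot is that we may assume $G$ is $3$-regular, which is exactly the regime in which the conjecture is genuinely hard.

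Next I would build a catalogue of reducible configurations in the cubic case — short cycles ($C_3$, $C_4$, $C_5$), adjacent triangles, two adjacent vertices whose further neighbourhoods are restricted, and other low-girth clusters — and prove for each that a $6$-star coloring of the reduced graph extends to $G$. The colour-counting here is driven by the star condition: assigning a colour to an edge $uv$ forbids not only the colours on edges incident to $u$ or $v$, but also any colour that would complete a bicolored $P_5$ (path with four edges) or $C_4$ through $uv$. Each reduction is therefore a bounded but delicate case analysis over the at most $6^{O(1)}$ boundary colorings of the configuration, checking that a free colour survives all of these extra constraints.

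The main obstacle, and the reason the statement is open in full generality, is the concluding ``global'' phase. For planar or bounded-maximum-average-degree subcubic graphs one assigns charges via Euler's formula and redistributes them to show that a graph free of all the reducible configurations cannot exist; this is how the bound $6$ has been confirmed for planar, outerplanar, and small-girth subclasses. For arbitrary, possibly non-planar, cubic graphs there is no Euler relation to supply an initial charge, so the final contradiction must instead come from a global counting or entropy-compression/probabilistic argument bounding the density of ``bad'' local patterns and matching it against a \emph{complete} reducible set. It is precisely the absence of such a global accounting, valid for every subcubic graph rather than a restricted class, that leaves the gap between the general bound $7$ and the conjectured $6$. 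I would therefore expect the bulk of the effort — and the real risk of the approach stalling — to lie in replacing planar discharging by a weighting scheme that is sound for all subcubic graphs.
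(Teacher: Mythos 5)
Your proposal does not prove the statement, and you say so yourself: the entire argument hinges on a concluding ``global'' phase (a discharging/counting scheme valid for all subcubic graphs, matched against a \emph{complete} set of reducible configurations) that you explicitly admit you cannot supply. A minimal-counterexample argument only yields a contradiction once both halves exist --- every configuration in the catalogue must be proved reducible by an actual case analysis (you never exhibit the catalogue, nor carry out a single extension check), and one must prove that a cubic graph avoiding all of them cannot exist. Without that second half the proposal is a research plan, not a proof; with the statement being a well-known open conjecture, no amount of local reduction (pendant edges, degree-two suppression, short cycles) closes it. Even the local steps you do sketch are not safe as stated: suppressing a degree-two vertex can create multiple edges or shrink a $C_4$/$P_5$ in a way that invalidates the star condition when you lift the coloring back, so each such reduction also requires the kind of boundary-coloring analysis you defer.

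You should also be aware that the paper never claims to prove this conjecture in full generality, and its ``proof'' of this statement is of a completely different character from your attempt. The conjecture is quoted from Dvo{\v{r}}{\'a}k et al.\ as motivation; what the paper actually does is \emph{confirm} it for one class of subcubic graphs, the generalized Petersen graphs $GP(n,k)$ (with $GCD(n,k)\geq 3$, and partially for $GCD(n,k)=2$), by explicitly constructing $5$-star edge colorings: it decomposes $GP(n,k)$ into the outer cycle, the inner cycles, and the spokes, colors the cycles with $\{0,1,2\}$ and the spokes with $\{3,4\}$ via extension lemmas for partial colorings of paths and cycles (Lemmas~\ref{lem1}--\ref{lem3}), and stitches the pieces together in the six steps of Theorem~\ref{th:main}. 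That constructive, class-specific approach establishes the bound $5$ (stronger than $6$) but only for this family; your approach aims at the general statement and stalls exactly where the general statement is open.
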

Many attempts to prove this conjecture have been made  and the quest  go on.
 In~\cite{class}, Bezegov{\'a}~et~al. obtained some bounds on the star
chromatic index of subcubic outerplanar graphs, trees and outerplanar graphs. They showed that $\chi^\prime_s(G) \leq 5$, where $G$ is a
subcubic outerplanar graph. In~\cite{Omoomi1},    the star chromatic index of two-dimensional grid graphs, also confirmed   this conjecture.

In 2018, Lei et al. \cite{Lei0,Lei} consider the maximum average degree of subcubic graphs and obtained some results. The {\it maximum average degree} of a graph $G$, denoted $mad(G)$, is defined as the maximum of $2|E(H)|/|V(H)|$ taken over all the subgraphs $H$ of $G$.
In \cite{Lei0,Lei}, Lei et al. proved that if $G$ is a subcubic graph, then
 \[\chi^\prime_s(G)\leq\begin{cases}
 4~~~mad(G)<2,\\
 5~~~mad(G) < 12/5,\\
 6~~~mad(G)<5/2.
 \end{cases}
 \]
% $\chi^\prime_s(G)\leq 5$ and $\chi^\prime_s(G)\leq 6$, for every subcubic graph $G$ with  $mad(G) < \frac{12}{5}$ and $mad(G)<\frac{5}{2}$, respectively. 
%Let $G$ be a graph with vertex set $V(G)$ and edge set $E(G)$. Two vertices $u$ and $v$ of $G$ are adjacent if $uv\in E(G)$. 
%If a vertex $v$ is an endpoint of edge $e$, we say that $e$ is incident to $v$. Moreover, we say
%that two edges $e$ and $e^\prime$ are adjacent if they have a common endpoint.
%The list version of star edge coloring is also studied and bounds are given in terms of maximum average degree of the graph (see \cite{Ker0,Ker1,Ker2,luzar}).

In this paper, we are going to study the star chromatic index of generalized Petersen graphs as a class of subcubic graph.
 Various coloring of generalized Petersen graphs,   such as acyclic coloring, strong and injective edge coloring, have been studied in \cite{Chen,Li,Zhu2}.
For integers $n$ and $k$ with $n\geq 2 k$, the {\it generalized Petersen graph} $GP(n, k)$ has  vertices $\{u_0,\ldots,u_{n-1}\}\cup \{v_0,\ldots,v_{n-1}\}$ and edges $(\bigcup_{i=0}^{n-1}{u_iu_{i+1}})\cup(\bigcup_{i=0}^{n-1}{v_iv_{i+k}})\cup(\bigcup_{i=0}^{n-1}{u_iv_{i}})$, where indices of vertices are modula $n$. 
 Each edge of  $\{u_iv_{i}: 0\leq i\leq n-1\}$ is called an {\it spoke} of $GP(n,k)$. Let $d$ be the greatest common divisor $n$ and $k$ (or $d=GCD(n,k)$). The value of $d$ determines the number of disjoint cycles (or disjoint edges when $n=2k$) on $\{v_0,\ldots, v_{n-1}\}$. Moreover, if $n>2k$, then  $GP(n,k)$ is a cubic graph. For $n=2k$,  the vertices of $\{v_0,\ldots,v_{n-1}\}$ are of degree two and $GP(n,k)$ is a subcubic graph.

 In 2021, Zhu et al. studied the star chromatic index of cubic generalized Petersen graphs in~\rm{\cite{Zhu}}. 
 They proved that $\chi^\prime_s(GP(n, k)) = 4$ if and only if $n=0 \pmod{4}$ and $k$ is odd.
 They also showed that  for every two integers $n$ and $k$, $n>2k$, such that
$d\geq 3$, $GP(n, k)$ has a 5-star edge coloring, with the exception of the
case that $d= 3$, $k\neq3$ and $\frac{n}{3}=1  \pmod 3$.
Moreover, they obtained some partial results for 5-star edge coloring of $GP(n,k)$, in the following cases.
\begin{itemize}
\item
 $n = 0 \pmod 2$, $k = 1 \pmod 2$, and $d= 1$. 
\item
$n\geq 5$ and $k=1$.
\item
$n=0\pmod 6$, and $k=2$.
\end{itemize}
 Furthermore, Zhu et al.  found  that $\chi^\prime_s(GP(3,1))=6$ and conjectured that $GP(3,1)$ is the unique generalized Petersen graph that admits no 5-star edge coloring.
\begin{conjecture} {\rm \cite{Zhu}}\label{conj2}
If $GP(n,k)\neq GP(3,1)$, then $\chi^\prime_s(GP(n,k))\leq 5$. 
\end{conjecture}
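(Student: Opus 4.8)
The plan is to prove Conjecture~\ref{conj2} by producing, for every $GP(n,k)$ with $n\geq 2k$ and $(n,k)\neq(3,1)$, an explicit proper edge coloring with five colors that contains no bicolored path or cycle with four edges. The organizing principle is the rotation $\sigma:u_i\mapsto u_{i+1},\ v_i\mapsto v_{i+1}$, which generates a cyclic group of automorphisms of order $n$, together with the observation that the star condition is \emph{local}: whether an edge lies in a forbidden four-edge configuration depends only on the colors assigned within a bounded window of consecutive indices around it. Hence it suffices to design a coloring that is periodic with a small period $p$ in the index $i$, to verify properness and the star condition once on a fundamental block of $p$ consecutive indices, and then to verify them a second time across the single \emph{seam} where the pattern closes up modulo $n$. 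The whole problem then reduces to exhibiting, for each residue of $n$ modulo $p$, a valid block pattern and a valid seam.

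First I would clear the cases already within reach. When $n\equiv 0\pmod 4$ and $k$ is odd, the characterization of~\cite{Zhu} gives $\chi'_s(GP(n,k))=4$, so nothing is needed. The case $d\geq 3$ is precisely the main theorem of the present paper, which completes the result of~\cite{Zhu} by also covering the family $d=3$, $k\neq d$, $\frac{n}{3}\equiv 1\pmod 3$ that was previously excluded; thus every $GP(n,k)$ with $d\geq 3$ is settled. Since $n=2k$ forces $d=k$, the subcubic graphs with $k\geq 3$ are included here as well. What remains is $d\in\{1,2\}$, in which the inner vertices form two $\frac{n}{2}$-cycles or a single $n$-cycle, respectively (degenerating to a matching when $n=2k$).

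For $d=2$ I would adapt the $d\geq 3$ constructions. Writing $n=2m$ with $k$ even and $\gcd(m,k/2)=1$, the inner edges split the vertices $v_0,\dots,v_{n-1}$ into the cycle on even indices and the cycle on odd indices. I would color each inner cycle by a periodic pattern of the type used for $d\geq 3$, but impose a relative phase shift between the two. The four-edge configurations genuinely new to $d=2$ are the bicolored paths running inner edge, spoke, outer edge, spoke (and their reflections), which can now carry a two-colored pattern from one inner cycle to the other through the outer cycle; the phase shift is chosen exactly to break every such continuation, while the outer-cycle and spoke colors come from a period-$p$ pattern compatible with both phases. The seam is then the same local check as in the $d\geq 3$ argument, repeated for each residue of $m$ modulo $p$.

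The crux is $d=1$, where the single inner cycle of length $n$ offers no splitting to exploit, and I expect this to be the main obstacle. I would treat it in layers. For $k\in\{1,2\}$ I would extend the partial constructions of~\cite{Zhu}---which already handle $k=1$ with $n\geq 5$ and $k=2$ with $n\equiv 0\pmod 6$---to the remaining residues of $n$ by supplying the missing block patterns and seams, leaving only finitely many small graphs; a direct check of these confirms that $GP(3,1)$ is the unique one admitting no $5$-star edge coloring, matching the stated exception. The case $n$ even with $k$ odd is likewise only partially covered in~\cite{Zhu}, and I would complete it along the same lines. For general $k$ coprime to $n$, the inner cycle visits $v_0,v_k,v_{2k},\dots$, so I would re-index the inner vertices along this traversal and design the inner pattern in the re-indexed frame, while keeping the outer cycle periodic in the original index and the spokes compatible with both. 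The genuinely delicate point is exactly this simultaneous compatibility: the outer cycle imposes a period governed by $n$, the inner cycle a period governed by the jump $k$, and the spokes must reconcile the two incommensurate periodicities while avoiding the four-edge paths that alternate spoke/outer and spoke/inner edges. Matching these periods, and controlling the seam for every residue of $n$ modulo their combined period, is where essentially all of the case analysis and the only nontrivial verifications reside; the finitely many residues for which no periodic block closes up cleanly would be dispatched by ad hoc colorings or direct computation.
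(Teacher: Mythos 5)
You have not produced a proof; you have produced a plan, and the part of the plan that is missing is exactly the part that is open. Note first that the statement you were asked to prove appears in the paper as a conjecture (due to Zhu and Shao), not as a theorem: the paper establishes it only for $d=\gcd(n,k)\geq 3$ (Theorem~\ref{th:main}), for $\frac{n}{d}\in\{2,5\}$ (Lemma~\ref{lem:n/d5}), and for three residue families with $d=2$ (its second theorem, requiring $n\equiv 0\pmod 6$, or $n\equiv 2\pmod 6$ with $t\equiv 2\pmod 3$, or $n\equiv 4\pmod 6$ with $t\equiv 1\pmod 3$). The cases $d=1$ and the remaining $d=2$ families are left open. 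Your outline reproduces exactly this state of affairs: for $d\geq 3$ you simply cite the paper's main theorem, and for $d\in\{1,2\}$ you describe a strategy (periodic block patterns plus a seam check, a phase shift between the two inner cycles, re-indexing along the inner cycle) without exhibiting a single pattern, a single seam verification, or even a candidate period $p$. You concede yourself that for $d=1$ ``essentially all of the case analysis and the only nontrivial verifications reside'' in the part you do not carry out, and that leftover cases ``would be dispatched by ad hoc colorings or direct computation.'' Deferring the entire content of the claim to unspecified future case analysis leaves the conjecture precisely as open as it was.

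There is also a concrete flaw in your organizing principle for $d=1$. The reduction to ``verify one block, then one seam'' rests on the claim that membership of an edge in a bicolored four-edge path or cycle depends only on colors within a bounded window of consecutive indices. That is false when $k$ is unbounded: a four-edge path may use two or three inner edges, e.g. $v_{i-k}v_i,\,v_iv_{i+k},\,v_{i+k}u_{i+k},\,u_{i+k}u_{i+k+1}$, so colors at indices as far apart as $2k$ or $3k$ interact. A coloring periodic in $i$ with period $p$ therefore has its star property depending on $k\bmod p$ and on how the traversal $v_0,v_k,v_{2k},\ldots$ wraps around, not merely on $n\bmod p$; the verification is not one block plus one seam, and the seam is not a single localized cut, because the inner cycle closes up in a different cyclic order than the outer one. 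This is exactly the ``incommensurate periodicities'' you name at the end, and naming an obstruction is not overcoming it. The paper's machinery sidesteps this issue rather than solving it: its decomposition into $C^0,C^1,\ldots,C^d$ plus spokes, with the extension lemmas (Lemmas~\ref{lem1}--\ref{lem3}) for partial $3$-star colorings of paths and short cycles, requires the inner vertices to fall into $d\geq 2$ cycles of length $n/d$ with a controlled connector structure, which is why the paper stops at $d\geq 3$ with only partial results for $d=2$ and nothing for $d=1$. Your proposal does not supply what would be needed to go further.
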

Let $t$  be the minimum positive integer  that $tk=d\pmod{n}$. 
In this paper, we consider $GP(n,k)$ with $n\geq 2k$, and prove  Conjectures~\ref{conj2} and Conjecture~\ref{conj},  for the generalized Petersen graphs with the following parameters.
\begin{itemize}
\item
 $d\geq 3$ (contains Zhou et al.'s exception for $d=3$).
\item
$d=2$, and 
\begin{itemize}
\item 
$n = 0 \pmod 6$.
\item
$n=2\pmod 6$, $t=2\pmod 3$.
\item
$n=4\pmod 6$, $t=1\pmod 3$.
\end{itemize}
\item
 $\frac{n}{d}\in \{2,5\}$.
%\item
%$n=1\pmod{2}$, $k=2$???
\end{itemize} 
This paper is organized as follows. In Section~\ref{pre}, among some terminology and notations, we prove some useful lemmas for extending the  partial star edge coloring of paths and cycles. In Section~\ref{main}, we consider $GP(n,k)$ with $d\geq 2$. We first  present patterns for 5-star edge coloring of $G(n,k)$ which $n=5$ or $\frac{n}{d}\in\{2,5\}$. Then, we prove $\chi^\prime_s(GP(n,k)\leq 5$ when $d\geq 3 $. Finally,  we present some partial results for 5-star edge coloring of $GP(n,k)$, where $d=2$.

\section{Partial star edge coloring of paths and cycles}\label{pre}
This section contains the terminology and notations that we need through the paper.  Moreover, we prove some useful lemmas which are used for coloring  some paths and cycles  of  a given generalized Petersen graph in Section~3.

The {\it distance of two vertices} of $G$ is the number of edges in a shortest path connecting them.
 The {\it line graph} of a graph $G$, $L(G)$, is a graph whose vertices correspond to the edges of $G$, and every two vertices of  $L(G)$ are adjacent if and only if the corresponding edges of $G$ are incident with the same vertex of $G$.  The {\it distance of two edges} of $G$  is equal to their corresponding vertex distance in  the line graph of $G$.

 We demonstrate a path or cycle by the sequence of its vertices or its edges. For example,   $P:=x_0,x_1,\ldots,x_{n-1}$ is a path, and $C:=x_0,x_1,\ldots,x_{n-1},x_0$ is a cycle on vertex set $\{x_0,\ldots,x_{n-1}\}$. If we use notations   $e_i:=x_ix_{i+1}$, $0\leq i\leq n-1$, for the edges of $P$ and $C$, then we could denote $P:=e_0,e_1,\ldots,e_{n-2}$ and $C:=e_0,e_1,\ldots,e_{n-1}$.  Note that  the subscripts of vertices in every cycle  or path, are taken modulo size of its vertex set) . The {\it length} of a path or a cycle is the number of its edges. For every integer $k\geq 1$,  {\it $k$-path} ($k$-cycle) is a path (cycle) of length $k$. 
 
  Consider a direction (clockwise or anti-clockwise) on $C$ or $P$ with $n$ edges. For every edge $e$ of  $C$ (or $P$) and  positive integer $r<n$, $e+r$ denotes the   $r$-th edge after $e$ on $C$ (or $P$) and $e-r$ denotes  the   $r$-th edge before $e$ on $C$ (or $P$). Note that for path $P$, the value of $r$ should be such that $e-r,e+r\in P$.

For every edge coloring $f$ of  a graph $G$,  we define some notations as follows.
\begin{enumerate}
\item[$\bullet$]
For every vertex $x$ of $G$, $\mathcal{F}_G(x)$ is the set of colors of the edges incident to~$x$.
%\item[$\bullet$]
%For every vertex $x$ of $G$, $\mathcal{F}_s(x)$ is the color  of spoke incident to $x$.
\item[$\bullet$]
For every edge $xy$ of $G$, $\mathcal{F}_G(xy)$ is the set of colors of the edges adjacent to $xy$.

\item[$\bullet$]
 For every path $P=x_0,x_1,\ldots,x_n$ or cycle $C=x_0,x_1,\ldots,x_n,x_0$, we denote the sequence of colors of  edges by $f(P)$ and $f(C)$ which
\begin{align*} 
&f(P):=f(x_0x_1),f(x_1x_2),\ldots,f(x_{n-1}x_n),\\
&f(C):=f(x_0x_1),f(x_1x_2),\ldots,f(x_{n-1}x_n),f(x_{n}x_{0}).
\end{align*}
\end{enumerate}

\begin{lemma}\label{lem1}
Let $C=x_0,\ldots,x_{n-1},x_0$ be a cycle of length $n\neq 5$, and $f$ is a partial edge coloring of at most 6 edges of $C$ such that $f(x_0,x_1,x_2,x_3)=a,b,c$ and for some $0\leq i< n$,
%If  two paths $P:=x_0,x_1,x_2,x_3$ and $P^\prime:=x_i,x_{i+1},x_{i+2},x_{i+3}$, $0\leq i\leq n$, of $C$ are colored  as $f(P)=a,b,c$ and 

\[f(x_i,x_{i+1},x_{i+2},x_{i+3})=
\begin{cases}
%a,b,c~~~&i\in\{0,3\}\setminus\{n-1\},\\
a,b,c~~~&i=0,\\
b,c,b~~~& i=1,\\
c,b,a~~~&i\in\{2,n-2\}\\
b,a,b~~~&i=n-1,\\
a,c,b~~~&otherwise.
\end{cases}
\]
Then, the partial edge coloring $f$ of $C$ can be extended to a 3-star edge coloring of $C$.
\end{lemma}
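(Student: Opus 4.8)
The plan is to use the elementary structural description of $3$-star edge colorings of a cycle. Since $f$ must be proper, a window of four consecutive edges $e_j,e_{j+1},e_{j+2},e_{j+3}$ is bicolored precisely when $e_j=e_{j+2}$ and $e_{j+1}=e_{j+3}$, i.e. when it reads $x,y,x,y$. Hence a proper $3$-edge-coloring is a $3$-star edge coloring iff no four consecutive edges read $x,y,x,y$. Reading the colors around $C$, this forces the coloring to decompose into maximal \emph{runs} of the period-three pattern $a,b,c,a,b,c,\dots$ (a \emph{forward} run) or of its reverse $\dots,c,b,a,c,b,a$ (a \emph{backward} run), where opposite runs meet only at a \emph{turn-around}, a palindromic triple $x,y,x$: after $x,y,x$ the next color is forced to the third color $z$, and $x,y,x,z$ reverses the direction. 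Conversely, any cyclic word built from forward/backward runs joined at turn-arounds and containing no $x,y,x,y$ window is a $3$-star edge coloring of $C$.

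Next I would read off what each value of $i$ prescribes in this language. The first triple $e_0,e_1,e_2=a,b,c$ is a forward run, and the six cases collapse to three genuinely distinct local configurations. For $i=0$ nothing beyond the forward run is prescribed. For $i\in\{1,2\}$ the data force the palindrome $b,c,b$ immediately after the start (for $i=2$ this is seen from $e_1=b$ together with $e_2,e_3,e_4=c,b,a$), so the coloring continues as a backward run. Symmetrically, for $i\in\{n-2,n-1\}$ the data force the palindrome $b,a,b$ at the start, so $C$ enters position $0$ along a backward run. Finally, for $3\le i\le n-3$ the second triple $a,c,b$ is a backward run anchored at $e_i$, disjoint from the first. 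In every case the prescribed edges are consistent with a cyclic word made of forward and backward runs joined by turn-arounds, and the uncolored edges form one or two free arcs.

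The construction then fills the free arcs by period-three runs matching the prescribed boundary colors, inserting turn-arounds only where needed. The sole global obstruction is closure: one traversal of $C$ must reverse direction an even number of times and produce net phase shift $0\pmod 3$. Writing $F$ and $B$ for the numbers of forward and backward edges, the net phase shift is $F-B\equiv 2F-n\pmod 3$, so closure holds once $F\equiv -n\pmod 3$; since we may choose where the (zero or two) turn-arounds sit, such an $F$ is always attainable. Concretely, when $3\mid n$ and no backward run is forced (the case $i=0$) the pure forward pattern already closes; otherwise we use exactly two turn-arounds—one or both of which may be the prescribed ones, e.g. a forced turn-around on each of the two arcs in the generic case $3\le i\le n-3$—and choose the forward portion in the required residue class. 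I would finish by checking, case by case, the star condition at the $O(1)$ junctions so created; each check inspects only the fixed four-edge windows around a turn-around, a run boundary, or the wrap-around at $e_{n-1},e_0,e_1,e_2$. This verification is routine but is where all the real work sits.

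The main obstacle is exactly this closure-with-anchors bookkeeping: one must fit two turn-arounds, together with the prescribed triples, on $C$ while keeping each run long enough to avoid creating an $x,y,x,y$ window. The unique length for which no such arrangement exists is $n=5$ (equivalently, $C_5$ admits no $3$-star edge coloring), which is precisely the excluded hypothesis. The remaining very short lengths are immediate: $C_3$ has no four-edge window, and in $C_4$ the single free edge is forced, giving $a,b,c,b$ (whose wrap-around already supplies the two turn-arounds), so both are handled directly.
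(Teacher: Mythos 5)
Your framework is sound and genuinely different from the paper's: you characterize the $3$-star edge colorings of a cycle as cyclic words built from period-three forward and backward runs joined at palindromic turn-arounds (equivalently, sign sequences of steps with every maximal block of length at least $2$ and total phase $\equiv 0 \pmod 3$), you read each of the lemma's cases correctly as anchored runs or forced turn-arounds, and this viewpoint cleanly explains why $n=5$ is excluded. The paper proceeds quite differently: for each case ($i=0$; $i\in\{1,n-1\}$; $i\in\{2,n-2\}$; $2<i<n-2$) and each residue of $n$ (or of $i$ and $n-i$) modulo $3$, it simply writes down an explicit periodic coloring with a short residue-correcting tail and verifies it directly.

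The genuine gap is that you never finish the argument, and what you defer is precisely the content of the lemma. Two concrete points. First, closure as you state it (an even number of reversals and global phase $F-B\equiv 0 \pmod 3$) is not the only constraint: once two triples are anchored, each of the one or two free arcs carries its own boundary-matching phase requirement, and every block must keep length at least $2$; a valid extension requires turn-around positions meeting all of these simultaneously, not just the global congruence. Second, ``such an $F$ is always attainable'' and ``choose the forward portion in the required residue class'' are feasibility assertions, not proofs: for short arcs the admissible switch positions realize only some residues modulo $3$, so the tight configurations ($i\in\{3,n-3\}$; $i=0$ with $n\equiv 1,2\pmod 3$, where both turn-arounds must fit in a single arc; $i\in\{1,2,n-2,n-1\}$ with small $n$ such as $n\in\{6,7\}$) each need an explicit check --- exactly the finite verification you label ``routine'' while conceding it is ``where all the real work sits.'' Carried out, your plan would yield a complete and arguably more conceptual proof than the paper's pattern-by-pattern construction; as written, the existence claim at its core is asserted rather than established.
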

\begin{proof}{
 Let $P:=x_0,x_1,x_2,x_3$ and $P^\prime:=x_i,x_{i+1},x_{i+2},x_{i+3}$, $0\leq i\leq n-1$, are two colored subpath of $C$.
 We consider the following cases, and in each case extend the given partial star edge coloring  $f$ to a star edge coloring of $C$ with colors of  $\{a,b,c\}$.

\noindent\textbf{Case 1:} $i=0$.\\
\[f(C)=
\begin{cases}
\underbrace{\underline{ a,b,c},\ldots,\underline{a,b,c}}_\text{n} &n=0 \pmod 3,\\
\underbrace{\underline{ a,b,c},\ldots,\underline{a,b,c}}_\text{n-1},b &n=1  \pmod 3,\\
\underbrace{\underline{ a,b,c},\ldots,\underline{a,b,c}}_\text{n-5},a,b,a,c,b &n=2  \pmod 3.
\end{cases}
\]
\textbf{Case 2:} $i\in\{1,n-1\}$.\\
The partial edge colorings are $f(x_0,x_1,\ldots, x_4)=a,b,c,b$  and $f(x_{n-1},x_0,\ldots,x_3)=b,a,b,c$, for $i=1$ and $i=n-1$, respectively. Since the giving  partial star edge coloring in both cases contains only a bicolored 3-path,  the extension of $f$ in both cases can be concluded with the same argument. Thus, without loss of generality, we assume that $i=1$ and extend the partial star edge coloring $f$ such that
 \[f(C)=
\begin{cases}
a,b,\underbrace{\underline{ c,b,a},\ldots,\underline{c,b,a}}_\text{n-3},c &n=0  \pmod 3,\\
a,b,c,b,\underbrace{\underline{ a,c,b},\ldots,\underline{a,c,b}}_\text{n-4} &n=1  \pmod 3,\\
a,b,c,b,\underbrace{\underline{ a,c,b},\ldots,\underline{a,c,b}}_\text{n-5},c &n=2  \pmod 3,\\
\end{cases}
\]

\noindent\textbf{Case 3:} $i\in\{2,n-2\}$.\\
In this case, the partial edge colorings are  $f(x_0,x_1,\ldots, x_5)=a,b,c,b,a$ and $f(x_{n-2},x_{n-1},\ldots,x_3)=c,b,a,b,c$, for $i=2$ and $i=n-2$, respectively. Since $C$ has a partial 3-star edge coloring of a 5-path  in both cases with the same pattern by renaming the colors,  we only consider the case $i=2$; the case $i=n-2$  follows similarly. Thus, for $i=2$ we extend  $f$ such that

% $f(x_0,\ldots, x_5)=a,b,c,b,a$ and $f(x_{n-2},\ldots,x_3)=c,b,a,b,c$ of $C$ for $i=2$ and $i=n-2$, respectively.
% Since in both cases we have only a bicolored 3-path in a 5-path  between two edges, it  suffices to show there exists a 3-star edge coloring of $C$ for $i=2$. 

\[f(C)=
\begin{cases}

a,b,\underbrace{\underline{ c,b,a},\ldots,\underline{c,b,a}}_\text{n-3},c & n=0  \pmod 3,\\
a,b,\underbrace{\underline{ c,b,a},\ldots,\underline{c,b,a}}_\text{n-4},c,b & n=1  \pmod 3,\\
a,b,\underbrace{\underline{ c,b,a},\ldots,\underline{c,b,a}}_\text{n-5},c,b,c & n=2  \pmod 3.
\end{cases}
\]
\textbf{Case 4:} $2<i<n-2$.\\
 In this case, $P$ and $P^\prime$ are edge disjoint  paths with coloring patterns
  $f( x_0,x_1,\ldots, x_3)=a,b,c$ and $f(x_i,x_{i+1},\ldots,x_{i+3})=a,c,b$. We provide a 3-star edge coloring of $C$ as follows.
\[f(x_0,x_1,\ldots,x_{i})=
\begin{cases}
\underbrace{\underline{ a,b,c},\ldots,\underline{a,b,c}}_\text{i} &i=0 \pmod 3,\\
\underbrace{\underline{ a,b,c},\ldots,\underline{a,b,c}}_\text{i-1},b &i=1 \pmod 3,\\
\underbrace{\underline{ a,b,c},\ldots,\underline{a,b,c}}_\text{i-2},a,b & i=2 \pmod 3.
\end{cases}
\]

\[~~f(x_{i},x_{i+1},\ldots,x_{0})=
\begin{cases}
\underbrace{\underline{ a,c,b},\ldots,\underline{a,c,b}}_\text{n-i} &n-i=0 \pmod 3,\\
\underbrace{\underline{ a,c,b},\ldots,\underline{a,c,b}}_\text{n-i-1},c &n-i=1 \pmod 3,\\
\underbrace{\underline{ a,c,b},\ldots,\underline{a,c,b}}_\text{n-i-2},a,c& n-i=2 \pmod 3.
\end{cases}
\]
%Obviously, in each case we have a 3-star edge coloring of $C$ with colors $\{0,1,2\}$, as desired.
It can be easily seen that the given edge coloring is a 3-star edge coloring of $C$.}
\end{proof}

% Let  $G$ be a graph  with a partial star edge coloring. A strict star edge coloring of $G$ (if there exist), is a star edge coloring of $G$  with $\chi^\prime_s(G)$ colors such that the numbers of bicolored 3-paths is minimum.
%  In this paper, when we sey we provide a strict star edge coloring of a path $P:=x_0,\ldots, x_j$ from the left end (or right end) of $P$, it means 
% a strict star edge coloring

%%%%%%%%%%%%%%%%%%%%%%%%%%%%55
%%%%%%%%%%%%%%%%%%%%%%%%%%%%%%55
%%%%%%%%%%%%%%%%%%%%%%%%%%%%%%%%%5
\begin{lemma}\label{lem:7-path}
Let  $P:=x_0,x_1,\ldots,x_n$, $n>4$,  be a path    with a partial 3-star edge  coloring $f$ of subpaths $x_0,x_1,x_2$  and $x_4,\ldots,x_n$. The giving partial edge coloring can be extend to a 3-star edge coloring of $P$ if  and only if at least one of the following condition holds.
\begin{itemize}
\item[\rm(i)]
$n\leq 5$.
\item[\rm(ii)]
$f(x_4x_5)\in \mathcal{F}_P(x_1)$.
\item[\rm{(iii)}]
$f(x_5x_6)\neq f(x_0x_1)$.
\end{itemize}

\end{lemma}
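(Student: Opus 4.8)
The plan is to reduce the whole statement to a finite check on the two uncolored edges $e_2 := x_2x_3$ and $e_3 := x_3x_4$. Writing $e_i := x_ix_{i+1}$, the data handed to us are the colors $\alpha := f(e_0)$, $\beta := f(e_1)$, $\gamma := f(e_4)$ and, when $n \geq 6$, $\delta := f(e_5)$, with $\alpha \neq \beta$ and $\gamma \neq \delta$ by properness; I call the colors still to be chosen $s := f(e_2)$ and $t := f(e_3)$. Since the colorings of $x_0,x_1,x_2$ and of $x_4,\ldots,x_n$ are already $3$-star, the only new obstructions caused by inserting $s,t$ come from the four-edge windows $W_j := (e_j,e_{j+1},e_{j+2},e_{j+3})$ that meet $\{e_2,e_3\}$, i.e. $j \in \{0,1,2,3\}$ (windows with $j \geq 4$ sit entirely inside the given coloring). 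Reading off ``proper and not bicolored'' on each window, the extension problem becomes: find $s,t$ with $s \neq \beta$, $t \neq \gamma$, $s \neq t$ (properness) subject to $(s,t)\neq(\alpha,\beta)$ from $W_0$, $\lnot(s=\gamma \wedge t=\beta)$ from $W_1$, $\lnot(s=\gamma \wedge t=\delta)$ from $W_2$, and $\lnot(t=\delta \wedge f(e_6)=\gamma)$ from $W_3$; for $n=5$ the windows $W_2,W_3$ are absent, and for $n=6$ the window $W_3$ is absent.

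For the ``if'' direction the crucial remark is that $W_2$ and $W_3$ both require $t=\delta$, so taking $t\neq\delta$ kills them simultaneously and leaves only $W_0,W_1$ and properness. Let $\tau$ be the unique color outside $\{\gamma,\delta\}$ and set $t:=\tau$, which respects $t\neq\gamma$ and $t\neq\delta$. If $\tau\neq\beta$, then $W_0,W_1$ (active only when $t=\beta$) impose nothing, and the unique color outside $\{\beta,\tau\}$ is a legal value for $s$, so the extension exists. If instead $\tau=\beta$, then $t=\beta$ activates $W_0,W_1$, forcing $s\notin\{\alpha,\beta,\gamma\}$; this is solvable exactly when two of $\alpha,\beta,\gamma$ coincide, i.e. (as $\alpha\neq\beta$) when $\gamma\in\{\alpha,\beta\}=\mathcal{F}_P(x_1)$, which is (ii). Finally, one checks that $\tau=\beta$ together with $\gamma\notin\{\alpha,\beta\}$ forces $\delta=\alpha$; hence if (iii) $\delta\neq\alpha$ holds while $\gamma\notin\{\alpha,\beta\}$ we are necessarily in the case $\tau\neq\beta$ and again succeed, while (i) $n\leq5$ simply deletes $W_2,W_3$ so that the easier two-color check on $W_0,W_1$ always goes through. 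Thus each of (i),(ii),(iii) yields an extension.

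For the ``only if'' direction I would show that if none of (i)--(iii) holds --- that is, $n\geq6$, $\gamma\notin\{\alpha,\beta\}$ (so $\alpha,\beta,\gamma$ exhaust the three colors) and $\delta=\alpha$ --- no legal pair exists. Properness confines $s\in\{\alpha,\gamma\}$ and $t\in\{\alpha,\beta\}$ with $s\neq t$, leaving only $(\alpha,\beta)$, $(\gamma,\alpha)$ and $(\gamma,\beta)$; these are destroyed respectively by $W_0$, by $W_2$ (using $\delta=\alpha$), and by $W_1$. Hence no extension is possible, which together with the previous paragraph gives the claimed equivalence.

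The step I expect to be the real obstacle is the bookkeeping around $W_3$: it is the only forbidden configuration reaching an edge ($e_6$) whose color is not named in the hypotheses, so I must argue it never genuinely restricts the answer. The clean way out is precisely the observation that $W_2$ and $W_3$ are neutralized together by the single choice $t\neq\delta$; once that is in hand, the only delicate point is to make the case split on whether the forced color $\tau$ equals $\beta$ match up exactly with conditions (ii) and (iii).
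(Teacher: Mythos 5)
Your proposal is correct and takes essentially the same route as the paper's proof: your choice $t:=\tau\notin\{\gamma,\delta\}$ is exactly the paper's rule of picking $f(x_3x_4)$ from $\{a,b,c\}\setminus\mathcal{F}_P(x_5)$ (which is also how the paper neutralizes the windows touching $e_4,e_5,e_6$), your case split on whether $\tau=\beta$ matches the paper's split on whether $b\in\mathcal{F}_P(x_5)$, and your only-if enumeration of the three surviving pairs $(s,t)$ is the paper's check that neither $f(x_3x_4)=a$ nor $f(x_3x_4)=b$ leaves a legal value for $f(x_2x_3)$ when $f(x_4,x_5,x_6)=c,a$. The differences are organizational only: you phrase everything as a finite constraint-satisfaction problem on the windows $W_0,\ldots,W_3$ and dispose of $n=5$ by deleting $W_2,W_3$, where the paper instead gives an explicit assignment for $n=5$ and restates the failure condition as $f(x_4,x_5,x_6)=c,a$.
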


\begin{proof}{
Assume that the color set is $\{a,b,c\}$ and  $f(x_0,x_1,x_2)=a,b$.
If $n=5$, then  we have three colored edges $\{x_0x_1,x_1x_2,x_4x_5\}$ and we can set 
\[f(x_2x_3)=c, ~~~f(x_3x_4)=\begin{cases}
b~~~&f(x_4x_5)=a,\\
a~~~&otherwise.
\end{cases}
\]
For $n\geq 6$ it is suffices to show that the partial edge coloring of $P$ can not be extended to a 3-star edge coloring of $P$ if and only if  $f(x_4,x_5,x_6)=c,a$.
First, assume that  $f(x_4,x_5,x_6)\neq c,a$. We choose $f(x_3x_4)$ from $\{a,b,c\}\setminus\mathcal{F}_P(x_5)$. There are  two cases; $b\in\mathcal{F}_P(x_5)$ or $f(x_4,x_5,x_6)\neq a,c$.  In the first case,  $f(x_3x_4)\neq b$  and we can choose $f(x_2x_3)$ from $\{a,c\}\setminus f(x_3x_4)$.  In the second case, $f(x_3x_4)=b$. Hence, we set $f(x_2x_3)=c$. In both cases, we have a 3- star edge coloring of $P$.

Now suppose that  $f(x_4,x_5,x_6)=c,a$. If we set $f(x_3x_4)=a $ or  $f(x_3x_4)=b$, then there is no choice for $f(x_2x_3)$. Thus, the partial edge coloring can not be extended to a 3-star edge coloring of $P$.
}\end{proof}

By considering  Lemma \ref{lem:7-path} and its proofs, we propose two Process  to extend the given partial edge coloring $f$ of path $P:=x_0,x_1,\ldots,x_n$ to a  3-star edge coloring.  
%we could In the sequel, we need to extend the specific partial coloring of paths to a general 3-star edge coloring of them. 
%Therefore, we consider the following cases  for the given partial coloring $f$ of path $P:=x_0,x_1,\ldots,x_n$, and in each case,  we propose a Process  to extend the partial edge coloring.

\noindent$\bullet$ {\bf Process 1:}  If at least two last edges  at one side of $P$ (e.g $x_0x_1$ and $x_1x_2$ in the left side) and exactly one edge at the other side ($x_{n-1}x_n$ in the right side) are colored by $f$, then we extend $f$ to $f_1$  by starting coloring  from the uncolored edge  on the side with more colored edges (left side)  and moving to the another side (right side). We choose the color of each uncolored edge  $e$ of the path such that
\[f_1( e)\in \begin{cases}
  \{a,b,c\}\setminus \{f_1(e-1),f_1(e-2)\},~~~&|\mathcal{F}_P(e)|=1,\\
  \{a,b,c\}\setminus \mathcal{F}_P(e)~~~&otherwise.\\
  \end{cases}
  \]

%\noindent$\bullet$ If edges $x_0x_1$, $x_1x_2$, and $x_{n-1}x_n$ are the only edges colored by $f$, then we extend $f$ to $f_1$  by choosing the color of $e=x_ix_{i+1}$, $i=2,3,\ldots,n-2$ (starting coloring from the first uncolored edge a) such that
%\[f_1( e)\in \begin{cases}
%  \{a,b,c\}\setminus \mathcal{F}(x_{i-1}),~~~&|\mathcal{F}(e)|=1,\\
%  \{a,b,c\}\setminus \mathcal{F}(e)~~~&otherwise.\\
%  \end{cases}
%  \]

\noindent$\bullet$ {\bf Process 2:} Let  two edges in each end of $P$ are colored  by $f$ and the direction for moving is given (clockwise or anti-clockwise). Then, we start coloring from the first uncolored edge based on the given direction and extend $f$ to $f_2$  such that

 \[f_2(e)\in
\begin{cases}
\{a,b,c\}\setminus\{f_2(e-1),f_2(e-2)\}~~~&|\mathcal{F}_P(e)|=1, \text{edge}~ e+3~\text{is uncolored},\\
\{f_2(e+3),f_2(e+4)\}\setminus\{f_2(e-1)\}~~~& \text{edges}~e+3,e+4~\text{are already colored},\\
\{a,b,c\}\setminus \mathcal{F}_P(e)~~~&otherwise.
\end{cases}
\]
%To define $f_2$ for moving from the left to the right,  we could define a simmilar process.
%%%%%%%%%%%%%%%%%%%%%%%%%%%%%%%%%%%%
%%%%%%%%%%%%%%%%%%%%%%%%%%%%%%%%%%%%%%%%
%\begin{corollary}
%For cycle $C=x_0,\ldots,x_n$, if we propose an arbitrary 3-star edge coloring of subpath $P=x_0,...x_i$ with $1\leq i\leq n-1$, then this  partial coloring  could be extend to a 3-star edge coloring of $C$ if at least one of the following facts holds.
%\begin{enumerate}
%\item $i\leq n-4$.
%\item 
%$i=n-3$ and the color of a leaf does not appear in distance at most two of the leaf.
%\item
%$i= n-2$ and the color of  each leaf does not apear in distance at most two of the leaf.
%\item
%$i=n-1$, and
%\begin{itemize}
%\item
%if $f(x_0x_1)\neq f(x_2x_3)$, then $f(x_{n-2}x_{n-1})\in \mathcal{F}(x_1)~\text{or}~f(x_0x_1)\neq f(x_{n-3}x_{n-2})$.
%\item
%if $f(x_{n-2}x_{n-1})\neq f(x_{n-4}x_{n-3})$, then $f(x_{0}x_{1})\in \mathcal{F}(x_{n-2})~\text{or}~f(x_0x_1)\neq f(x_{n-3}x_{n-2})$.
%\end{itemize}
%\end{enumerate}
%\end{corollary}

\begin{lemma}\label{lem2}
 Let $f$ be a partial proper edge coloring  of three edges of cycle   $C:=x_0,x_1,\ldots x_{n-1},x_0$ with $n\neq 5$.  If   at least two colored edges are  adjacent, then $f$ could be extend to a 3-star edge coloring of $C$. 
\end{lemma}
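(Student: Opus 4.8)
The plan is to reduce every instance to Lemma~\ref{lem1}, which already extends any configuration consisting of a colored $3$-path $a,b,c$ together with a second colored triple placed in one of its prescribed relative patterns. First I would normalize: by rotating the cycle I may assume the two adjacent colored edges are $x_0x_1$ and $x_1x_2$, and by globally renaming the three colors I may assume $f(x_0x_1)=a$ and $f(x_1x_2)=b$. Write the third colored edge as $x_jx_{j+1}$ with color $\gamma\in\{a,b,c\}$, where $2\le j\le n-1$. The strategy is, in each case, to color at most two or three further edges so that the resulting partial coloring is exactly one of the hypotheses of Lemma~\ref{lem1}, and then invoke that lemma. Each added color is either forced (to avoid a bicolored $4$-path) or free, and none creates a bicolored $4$-path or a clash, because every newly colored edge has an uncolored neighbour on its far side.

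I would split according to whether the third edge is adjacent to the pair. If $x_jx_{j+1}$ meets $\{x_0,x_2\}$ (that is, $j=2$ or $j=n-1$), then the three colored edges already lie in a window of length four; properness forces $\gamma\in\{a,c\}$ when $j=2$ and $\gamma\in\{b,c\}$ when $j=n-1$. After possibly coloring the one remaining edge of the window, whose color is then forced, and applying a suitable renaming, the window reads $a,b,c$ or $b,a,b,c$, i.e.\ precisely the $i=0$ or the $i=n-1$ configuration of Lemma~\ref{lem1}. If instead $x_jx_{j+1}$ is non-adjacent to the pair ($3\le j\le n-2$, which forces $n\ge 6$), I first set $f(x_2x_3)=c$ to complete the anchor $a,b,c$, and then grow $x_jx_{j+1}$ into a colored triple: since $\gamma$ dictates whether the given edge plays the first, middle, or last role, I color its one or two appropriate neighbours so that the triple becomes $a,c,b$ (for a generic position $2<i<n-2$) or $c,b,a$ (for the boundary positions $i\in\{2,n-2\}$), matching Lemma~\ref{lem1}. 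A short separate check disposes of the degenerate small cycles $n\in\{3,4\}$, where every admissible configuration is already adjacent and closes up directly.

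The main obstacle I anticipate is the bookkeeping at the \emph{boundary of the non-adjacent regime}, where the triple built around the third edge abuts or overlaps the anchor (the cases $i\in\{1,2,n-2\}$ and $n=6$). There one must check both that the extra colors can be inserted without producing a bicolored $4$-path and that the resulting overlapping pattern is \emph{exactly} one of those listed in Lemma~\ref{lem1}; in particular, the awkward subcase in which the third edge sits immediately after the anchor with the wrong color (giving locally $a,b,c,a$) has to be pushed one step further, turning it into a generic $a,c,b$ triple at $i=3$, before Lemma~\ref{lem1} applies. Once each configuration has been matched to a hypothesis of Lemma~\ref{lem1}, the conclusion follows at once, since $n\neq5$.
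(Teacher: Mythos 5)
Your overall strategy --- augmenting the three given edges until they form one of the two-path configurations of Lemma~\ref{lem1} --- is genuinely different from the paper's proof, which instead splits $C$ at the colored edges into two arcs and colors first the shorter and then the longer arc greedily via Processes 1 and 2 (underpinned by Lemma~\ref{lem:7-path}), with explicit exceptional colorings for $n\in\{6,7\}$. The reduction to Lemma~\ref{lem1} is a legitimate alternative route, but as written your prescription has a concrete hole. Your rule that ``$\gamma$ dictates whether the given edge plays the first, middle, or last role'' breaks down exactly when the position forced by that role is a boundary position, because there the Lemma~\ref{lem1} pattern demands a different color on that very edge and you cannot recolor it. Concretely, take $n=7$ with $f(x_0x_1)=a$, $f(x_1x_2)=b$, $f(x_5x_6)=a$ (so $j=n-2$, $\gamma=a$). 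After you set $f(x_2x_3)=c$, the color $a$ forces the third edge to be the first edge of a triple at $i=n-2$; but the $i=n-2$ pattern is $c,b,a$, whose first edge must be $c$, and neither the middle role (triple at $i=n-3$, which needs $c$ on that edge) nor the last role (triple at $i=n-4$, which needs $b$) is available either. The same breakdown occurs for $(j,\gamma)=(4,b)$, and for $(j,\gamma)=(3,c)$ you cannot even set $f(x_2x_3)=c$, by properness. Your anticipated repair --- pushing $a,b,c,a$ one step to a generic triple at $i=3$ --- applies to none of these three configurations.

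The missing idea is a reflection, not more bookkeeping: the hypothesis of Lemma~\ref{lem2} is invariant under reversing the orientation of $C$ (which swaps the colors $a$ and $b$ on the adjacent pair), and this reversal maps each of the three bad configurations onto one your scheme does handle. Equivalently, you must allow the anchor to be completed on \emph{either} side of the pair: for $(j,\gamma)=(n-2,a)$, color $x_{n-1}x_0=c$ instead of $x_2x_3$, so that the four colored edges read $a,c,a,b$ along $x_{n-2},x_{n-1},x_0,x_1,x_2$, which after renaming is precisely the $i=n-1$ pattern $b,a,b,c$ of Lemma~\ref{lem1}. With this symmetry (or the flexible choice of anchor side) added, your case analysis closes and the reduction goes through; without it, the proof as stated fails on the instances above.
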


\begin{proof}
{Without loss of generality, suppose that  path $x_0,x_1,x_2$ and edge $x_ix_{i+1}$, for some $2\leq i\leq n-2$, are colored by $f$ and $f(x_0,x_1,x_2)=a,b$. Consider paths $P_1:=x_2,x_3,\ldots,x_{i}$ and $P_2:=x_{i+1},x_{i+2}\ldots,x_{0}$. 
  To provide a 3-star edge coloring of $C$, we first color the shorter path. Since $n\neq 5$, the length of longer path is at least two.
  
 First,  assume that  $P_1$ is the shorter path.
  We extend the partial edge coloring of $x_0,x_1,P_1,x_{i+1}$ clockwise by Process 1.  
 With a simple check, we can see that at most one  bicolored subpath appears in $C$ after applying Process 1. 
%   of the sub-paths $x_0,x_1,x_2,x_3$ or $x_{i-2},x_{i-1},x_{i},x_{i+1}$ is bicolored. 
     If $x_{0},x_{1},x_{2},x_3$ is bicolored,  we extend the partial edge coloring of $x_{i-1},x_i,P_2,x_1,x_2$ anti-clockwise by Process 2; Otherwise, clockwise by Process 2. 
     Note that by Lemma~\ref{lem:7-path},  if the length of $P_2$ is greater than two or $f(x_ix_{i+1})\neq  c$, the partial edge coloring of $x_{i-1},x_i,P_2,x_1,x_2$ could be extended. 
     If the length of $P_2$ is two and    $f(x_ix_{i+1})=  c$, then  by applying the above coloring we obtain the following coloring of $C$ which obviously is a 3-star edge coloring.
\[f(C)=\begin{cases}
a,b,a,c,b,c~~~&n=6,\\
a,b,c,a,c,b,c~~~&n=7.
\end{cases}
\]

If $P_2$ is shorter, with the similar argument, we could obtain a 3-star edge coloring of $C$.
}
\end{proof}

\begin{lemma}\label{lem3}
 Let  $C:=x_0,x_1,\ldots x_{n-1},x_0$ be a cycle of length  $n\neq 5$. Then, there exists a 3-star edge coloring $f$ of $C$ with colors $\{a,b,c\}$ such that  for five  edges $\{x_0x_1,x_{i-1}x_i,x_ix_{i+1},x_{j-1}x_{j},x_{j}x_{j+1}\}$, we have
 \begin{itemize}
 \item
 $f(x_0x_1)\in\{a,b,c\}$.
 \item
  $2\leq i<j\leq n-2$, $|i-j|\geq 2$ is even, and $\mathcal{F}_C(x_i)=\{a,b\}$, $\mathcal{F}_C(x_j)=\{b,c\}$. 
 \end{itemize}
%Then, $f$ could be extended to a 3- star edge coloring of $C$ using colors $\{a,b,c\}$. 
\end{lemma}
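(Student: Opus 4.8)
The plan is to build the coloring explicitly, mirroring the style of Lemma~\ref{lem1}, while exploiting the freedom we have in placing the two distinguished vertices. First I would note that the condition $f(x_0x_1)\in\{a,b,c\}$ is automatic for any total coloring, so the real content is to produce a $3$-star edge coloring together with interior vertices $x_i,x_j$, $2\le i<j\le n-2$, $j-i$ even and at least $2$, whose color signatures are exactly $\mathcal{F}_C(x_i)=\{a,b\}$ and $\mathcal{F}_C(x_j)=\{b,c\}$. I would realise these signatures by prescribing the four edges incident to $x_i$ and $x_j$, say $f(x_{i-1}x_i)=a$, $f(x_ix_{i+1})=b$ and $f(x_{j-1}x_j)=b$, $f(x_jx_{j+1})=c$, deliberately placing the common colour $b$ on the two inner edges so that the two blocks are phase-compatible with a single periodic pattern. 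Once $i,j$ are chosen interior and far enough apart, these prescribed edges together with $x_0x_1$ are pairwise disjoint, and they cut $C$ into arcs that remain to be coloured.

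Next I would fill the arcs between the prescribed blocks. The cleanest route is to reduce to Lemma~\ref{lem1}: after relabelling so that the block at $x_i$ becomes the subpath $x_0,x_1,x_2,x_3$ coloured $a,b,c$, the block at $x_j$ can play the role of the second distinguished subpath, and choosing its position to land in the general (\emph{otherwise}) case of Lemma~\ref{lem1}, whose pattern $a,c,b$ produces precisely the signature $\{b,c\}$ at its middle vertex, lets me invoke that lemma to extend the partial coloring to all of $C$. Equivalently, I would colour the two arcs directly with the periodic blocks $a,b,c,\dots$ and $a,c,b,\dots$ used in the proof of Lemma~\ref{lem1}, adjusting the tail of each arc according to its length modulo $3$; this naturally splits into the three cases $n\equiv 0,1,2 \pmod 3$.

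The role of the hypotheses is concentrated in the final verification, which I expect to be the main obstacle. Since $j-i$ is even, the two arcs bounded by the blocks have determined lengths and, crucially, determined parities; together with $2\le i<j\le n-2$, this is exactly what forces the periodic fillings to reach the signature blocks in the correct phase, so that at each junction the two flanking colours and the block colours never create a bicoloured $4$-path $xyxy$. The substance of the argument is therefore the junction bookkeeping: checking at every meeting point of an arc with a prescribed block, and across $x_0x_1$, that no path or cycle of length four is bicoloured. Finally I would dispose of the degenerate configurations by hand, namely small $n$ and arcs of length at most three where the periodic description is too short to apply directly, while the value $n=5$ is excluded in the hypothesis (consistent with $\chi^\prime_s(C_5)=4$).
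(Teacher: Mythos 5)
There is a genuine gap, and it begins with how you read the statement. The indices $i,j$ and the edge $x_0x_1$ are not yours to choose: this lemma is invoked in Step~6 of the proof of Theorem~\ref{th:main} (in the case $d=3$, where $C^2=C^{d-1}$) on a cycle in which one edge (the role of $x_0x_1$) has already been given a prescribed color, and $x_i,x_j$ are the specific vertices whose incident spokes share the color of the exceptional connector; their positions are dictated by the graph, not selected. So the real content is: \emph{every} partial coloring consisting of an arbitrarily colored edge $x_0x_1$ together with signatures $\{a,b\}$ at $x_i$ and $\{b,c\}$ at $x_j$, for \emph{any} admissible pair $i<j$, extends to a 3-star edge coloring of $C$. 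Your proposal proves only the existential shadow of this --- ``there is a coloring and there are two vertices with these signatures'' --- after explicitly dismissing the $x_0x_1$ condition as automatic and \emph{choosing} the position of $x_j$ so as to land in the \emph{otherwise} case of Lemma~\ref{lem1}. That weaker statement cannot serve the application.

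Concretely, two things break. First, your reduction to Lemma~\ref{lem1} has no room for the third prescribed edge: Lemma~\ref{lem1} extends exactly two prescribed $3$-edge blocks, so the color it assigns to $x_0x_1$ cannot be forced to agree with a pre-assigned one. Making the filling of the arcs agree with a prescribed color at a prescribed place is precisely the hard part; it is what the paper's proof spends its effort on, cutting $C$ into the three arcs $P_1:=x_{i+1},x_i,\ldots,x_0$, $P_2:=x_{i-1},x_i,\ldots,x_{j+1}$, $P_3:=x_{j-1},x_j,\ldots,x_0,x_1$, all of which terminate at prescribed edges, and extending each greedily by Processes~1 and~2, with Lemma~\ref{lem:7-path} certifying that the last few edges of each arc can always be completed against the pre-colored ends. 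Second, under the correct universal reading your phase argument does not cover $j-i=2$: there the relative offset of your two blocks is $j-i-1=1$, which is not the \emph{otherwise} case of Lemma~\ref{lem1} (one needs its $i=1$ case with pattern $b,c,b$, or the paper's separate assignment $f(P_2)=b,a,c,b$), while your plan only treats far-apart blocks. The salvageable part of your idea is real: for given $i,j$ with $j-i\geq 4$ the offset $j-i-1$ is odd and lies between $3$ and $n-5$, hence does fall into the \emph{otherwise} case, and the evenness of $j-i$ is exactly the phase condition you identified. But without handling the prescribed color on $x_0x_1$ and the adjacent case $j=i+2$, the proof does not establish the lemma that Theorem~\ref{th:main} needs.
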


\begin{proof}
{ Consider three subpaths $P_1:=x_{i+1},x_{i},\ldots,x_1,x_0$, $P_2:=x_{i-1},x_i,\ldots,x_{j+1}$,  and $P_3:=x_{j-1},x_{j},\ldots,x_{n-1},x_0,x_1$ of $C$.
If $|i-j|=2$ , then we set $f(P_2)=b,a,c,b$. Otherwise, we set $f(x_{i-1},x_i,x_{i+1},x_{i+2})=a,b$ and $f(x_{j-2},x_{j-1},x_j,x_{j+1})=b,c$.
% $f(x_{i-1},x_i,x_{i+1},x_{i+2})=f(x_{j-2},x_{j-1},x_j,x_{j+1})=a,b,c$. 
%Thus, in both end of path $x_{i-1},x_{i},\ldots,x_{ j},x_{j+1}$ we do not have any bicolored path.
Now, we choose the shorter path of $P_1$ and $P_3$.  First, extend edge coloring of the shorter path clockwise by Process 1  and then extend the coloring of the longer one  anti-clockwise by Process 2. 

If $|\mathcal{F}_C(x_j)|=1$, then we extend the 3-star edge coloring of $P_2$  clockwise by  Process 2. Otherwise, we extend its partial edge coloring anti-clockwise by Process 2.

Note that, by Lemma~\ref{lem:7-path}, all of the above extensions are possible using colors $\{a,b,c\}$. Thus, we obtain a 3-star edge coloring of $C$.
}
\end{proof}

\section{Star edge coloring of $GP(n,k)$}\label{main}
%%%%%%%%%%%%%%%%%%%%%%%%%%%5
In this section, we consider  generalized Petersen graph  $GP(n,k)$, $n\geq 2k$, as a  union of  some cycles   and spokes.  Then, in several steps, we  provide a 5-star edge coloring of $GP(n,k)$ with $GCD(n,k)\geq 3$. Moreover,  we present some partial results for 5-star edge coloring of $GP(n,k)$, when $GCD(n,k)=2$.

%%%%%%%%%%%%%%%%%%%%%%%%%%%%%%%%5
%%%%%%%%%%%%%%%%%%%%%%%%%%%%%%%%%%%%
%%%%%%%%%%%%%%%%%%%%%%%%%%%%%%%%%%%5
%%%%%%%%%%%%%%%%%%%%%%%%%%%%%%%%%%%%%%%

%\begin{proof}{ 
Let  $GP(n,k)$ be a generalized Petersen graph (see Figure~1) with cycles $C^0,C^1,\ldots,C^d$, where
\begin{itemize}
\item $d=GCD(n,k)$, and $t$ is the minimum positive integer such that $tk=d\pmod{n}$.
\item $C^0:=u_0,\ldots,u_{n-1}$, $C^i=v^i_{0},\ldots,v^i_{\frac{n}{d}-1}$, for $1\leq i\leq d$.
\item for every $1\leq i\leq d$ and $0\leq r<\frac{n}{d}$,   vertices $v^i_{r}$  and $u_{i-1+rk}$ are matched by an spoke  $s^i_{r}$.  
\item vertex $u_{\ell}$, $\ell=k(\frac{n}{d}-1)\pmod{n}$, is the end of spoke $s^1_{\frac{n}{d}-1}$ in $C^0$.
\item we call the edges $u_{rd-1}u_{rd}$, $0\leq r\leq \frac{n}{d}-1$,  as \it{connector edges}. Each connector edge  of $C^0$, connects a spoke with an end on $C^1$ to  another spoke with an end on $C^d$. More precisely, for every $0\leq r<\frac{n}{d}$, $s^1_r$ and $s^d_{r-t}$ are  adjacent to a connector. 
\end{itemize} 
Note that, for  every  $0\leq r<\frac{n}{d}$ and $1\leq i\leq d$,  the ends of  $s^i_r$ and $s^i_{r+1}$  in $C^0$, are  in distance  $k$ . Moreover, the ends of  $s^i_r$ and $s^i_{r+t}$  in $C^0$, are in distance $d$.
% \usepackage[usenames,dvipsnames]{pstricks}
% \usepackage{epsfig}
% \usepackage{pst-grad} % For gradients
% \usepackage{pst-plot} % For axes
% \usepackage[space]{grffile} % For spaces in paths
% \usepackage{etoolbox} % For spaces in paths
% \makeatletter % For spaces in paths
% \patchcmd\Gread@eps{\@inputcheck#1 }{\@inputcheck"#1"\relax}{}{}
% \makeatother
% 
\begin{figure}[H]\label{fig1}
%\begin{center}
% \includegraphics[scale=.4]{fig1}
% \caption{A Petersen graph $G(n,k)$ with $GCD(n,k)=d$.}

% \end{center}
\centering
\psscalebox{0.75 0.75} % Change this value to rescale the drawing.
{
\begin{pspicture}(1,-3)(20.704,3)

\psline[linecolor=black, linewidth=0.04](1.8101643,0.80857015)(2.6358073,0.80857015)
\psline[linecolor=black, linewidth=0.04, linestyle=dotted, dotsep=0.10583334cm](2.8717053,0.80857015)(3.8152974,0.80857015)
\psline[linecolor=black, linewidth=0.04](3.9332464,0.80857015)(4.8768387,0.80857015)
\psellipse[linecolor=black, linewidth=0.04, fillstyle=solid,fillcolor=black, dimen=outer](2.23,-1.4360001)(0.15,0.15)
\psellipse[linecolor=black, linewidth=0.04, fillstyle=solid,fillcolor=black, dimen=outer](1.7276,0.7740905)(0.17692351,0.1723982)
\psellipse[linecolor=black, linewidth=0.04, fillstyle=solid,fillcolor=black, dimen=outer](3.9686313,0.7740905)(0.17692351,0.1723982)
\psellipse[linecolor=black, linewidth=0.04, dimen=outer](2.85026,0.777225)(1.5011692,0.47017688)
\psline[linecolor=black, linewidth=0.04](4.9947877,0.80857015)(5.8204308,0.80857015)
\psline[linecolor=black, linewidth=0.04, linestyle=dotted, dotsep=0.10583334cm](6.056329,0.80857015)(6.999921,0.80857015)
\psline[linecolor=black, linewidth=0.04](7.11787,0.80857015)(8.061461,0.80857015)
\psellipse[linecolor=black, linewidth=0.04, fillstyle=solid,fillcolor=black, dimen=outer](5.8558154,0.7740905)(0.17692351,0.1723982)
\psellipse[linecolor=black, linewidth=0.04, fillstyle=solid,fillcolor=black, dimen=outer](4.9122233,0.7740905)(0.17692351,0.1723982)
\psellipse[linecolor=black, linewidth=0.04, fillstyle=solid,fillcolor=black, dimen=outer](7.1532545,0.7740905)(0.17692351,0.1723982)
\psellipse[linecolor=black, linewidth=0.04, dimen=outer](6.0348835,0.777225)(1.5011692,0.47017688)
\psline[linecolor=black, linewidth=0.04](11.481983,0.80857015)(12.307626,0.80857015)
\psline[linecolor=black, linewidth=0.04, linestyle=dotted, dotsep=0.10583334cm](12.543524,0.80857015)(13.487116,0.80857015)
\psline[linecolor=black, linewidth=0.04](13.605065,0.80857015)(14.548657,0.80857015)
\psellipse[linecolor=black, linewidth=0.04, fillstyle=solid,fillcolor=black, dimen=outer](12.34301,0.7740905)(0.17692351,0.1723982)
\psellipse[linecolor=black, linewidth=0.04, fillstyle=solid,fillcolor=black, dimen=outer](11.399419,0.7740905)(0.17692351,0.1723982)
\psellipse[linecolor=black, linewidth=0.04, fillstyle=solid,fillcolor=black, dimen=outer](13.6404505,0.7740905)(0.17692351,0.1723982)
\psellipse[linecolor=black, linewidth=0.04, dimen=outer](12.5220785,0.777225)(1.5011692,0.47017688)
\psline[linecolor=black, linewidth=0.04, linestyle=dotted, dotsep=0.10583334cm](7.943513,0.80857015)(10.65634,0.80857015)
\psline[linecolor=black, linewidth=0.04](10.538391,0.80857015)(11.364034,0.80857015)
\psellipse[linecolor=black, linewidth=0.04, fillstyle=solid,fillcolor=black, dimen=outer](10.455827,0.7740905)(0.17692351,0.1723982)
\psline[linecolor=black, linewidth=0.04](17.73328,0.80857015)(18.558924,0.80857015)
\psline[linecolor=black, linewidth=0.04, linestyle=dotted, dotsep=0.10583334cm](18.79482,0.80857015)(19.738413,0.80857015)
\psellipse[linecolor=black, linewidth=0.04, fillstyle=solid,fillcolor=black, dimen=outer](18.594307,0.7740905)(0.17692351,0.1723982)
\psellipse[linecolor=black, linewidth=0.04, fillstyle=solid,fillcolor=black, dimen=outer](17.650717,0.7740905)(0.17692351,0.1723982)
\psellipse[linecolor=black, linewidth=0.04, fillstyle=solid,fillcolor=black, dimen=outer](19.891747,0.7740905)(0.17692351,0.1723982)
\psellipse[linecolor=black, linewidth=0.04, dimen=outer](18.773376,0.777225)(1.5011692,0.47017688)
\psline[linecolor=black, linewidth=0.04](16.789688,0.80857015)(17.615332,0.80857015)
\psline[linecolor=black, linewidth=0.04, linestyle=dotted, dotsep=0.10583334cm](14.548657,0.80857015)(17.261484,0.80857015)
\psellipse[linecolor=black, linewidth=0.04, fillstyle=solid,fillcolor=black, dimen=outer](14.584042,0.7740905)(0.17692351,0.1723982)
\psbezier[linecolor=black, linewidth=0.04](1.8101643,0.80857015)(0.71417844,1.2333242)(8.4424925,1.354)(10.32,1.3539999999999999)(12.197507,1.354)(20.892294,1.2696742)(19.80275,0.8294669)
\rput[bl](1.6214926,1.4727001){$u_0$}
\rput[bl](2.5650847,1.4727001){$u_1$}
\rput[bl](3.6886768,1.4727001){$u_{d-1}$}
\rput[bl](4.808167,1.4727001){$u_{d}$}
\rput[bl](10.115872,1.4727001){$u_{\ell-1}$}
\rput[bl](11.295362,1.4727001){$u_{\ell}$}
\rput[bl](12.003057,1.4267273){$u_{\ell+1}$}
\rput[bl](13.135366,1.4267273){$u_{\ell+d-1}$}
\rput[bl](14.550755,1.4267273){$u_{\ell+d}$}
\rput[bl](19.536922,1.3756865){$u_{n-1}$}
\psellipse[linecolor=black, linewidth=0.04, fillstyle=solid,fillcolor=black, dimen=outer](1.55,-1.4360001)(0.15,0.15)
\psline[linecolor=black, linewidth=0.04](1.62,-1.406)(2.34,-1.406)
\psline[linecolor=black, linewidth=0.04, linestyle=dotted, dotsep=0.10583334cm](2.34,-1.406)(2.96,-1.406)
\psline[linecolor=black, linewidth=0.04](3.18,-1.406)(5.06,-1.406)
\psellipse[linecolor=black, linewidth=0.04, fillstyle=solid,fillcolor=black, dimen=outer](3.03,-1.4360001)(0.15,0.15)
\psellipse[linecolor=black, linewidth=0.04, fillstyle=solid,fillcolor=black, dimen=outer](3.75,-1.4360001)(0.15,0.15)
\psellipse[linecolor=black, linewidth=0.04, fillstyle=solid,fillcolor=black, dimen=outer](4.41,-1.4360001)(0.15,0.15)
\psellipse[linecolor=black, linewidth=0.04, fillstyle=solid,fillcolor=black, dimen=outer](5.07,-1.4360001)(0.15,0.15)
\psline[linecolor=black, linewidth=0.04, linestyle=dotted, dotsep=0.10583334cm](5.28,-1.406)(5.9,-1.406)
\psellipse[linecolor=black, linewidth=0.04, fillstyle=solid,fillcolor=black, dimen=outer](5.85,-1.4360001)(0.15,0.15)
\psline[linecolor=black, linewidth=0.04](6.0,-1.406)(6.72,-1.406)
\psellipse[linecolor=black, linewidth=0.04, fillstyle=solid,fillcolor=black, dimen=outer](6.61,-1.4360001)(0.15,0.15)
\psbezier[linecolor=black, linewidth=0.04](1.62,-1.406)(2.04,-1.106)(2.4,-0.866)(4.02,-0.8660000000000002)(5.64,-0.866)(6.18,-1.046)(6.54,-1.466)
\psellipse[linecolor=black, linewidth=0.04, fillstyle=solid,fillcolor=black, dimen=outer](8.11,-1.4360001)(0.15,0.15)
\psellipse[linecolor=black, linewidth=0.04, fillstyle=solid,fillcolor=black, dimen=outer](7.43,-1.4360001)(0.15,0.15)
\psline[linecolor=black, linewidth=0.04](7.5,-1.406)(8.22,-1.406)
\psline[linecolor=black, linewidth=0.04, linestyle=dotted, dotsep=0.10583334cm](8.22,-1.406)(8.84,-1.406)
\psline[linecolor=black, linewidth=0.04](9.06,-1.406)(10.94,-1.406)
\psellipse[linecolor=black, linewidth=0.04, fillstyle=solid,fillcolor=black, dimen=outer](8.91,-1.4360001)(0.15,0.15)
\psellipse[linecolor=black, linewidth=0.04, fillstyle=solid,fillcolor=black, dimen=outer](9.63,-1.4360001)(0.15,0.15)
\psellipse[linecolor=black, linewidth=0.04, fillstyle=solid,fillcolor=black, dimen=outer](10.29,-1.4360001)(0.15,0.15)
\psellipse[linecolor=black, linewidth=0.04, fillstyle=solid,fillcolor=black, dimen=outer](10.95,-1.4360001)(0.15,0.15)
\psline[linecolor=black, linewidth=0.04, linestyle=dotted, dotsep=0.10583334cm](11.16,-1.406)(11.78,-1.406)
\psellipse[linecolor=black, linewidth=0.04, fillstyle=solid,fillcolor=black, dimen=outer](11.73,-1.4360001)(0.15,0.15)
\psline[linecolor=black, linewidth=0.04](11.88,-1.406)(12.6,-1.406)
\psellipse[linecolor=black, linewidth=0.04, fillstyle=solid,fillcolor=black, dimen=outer](12.49,-1.4360001)(0.15,0.15)
\psbezier[linecolor=black, linewidth=0.04](7.5,-1.406)(7.92,-1.106)(8.28,-0.866)(9.9,-0.8660000000000002)(11.52,-0.866)(12.06,-1.046)(12.42,-1.466)
\psellipse[linecolor=black, linewidth=0.04, fillstyle=solid,fillcolor=black, dimen=outer](15.67,-1.4360001)(0.15,0.15)
\psellipse[linecolor=black, linewidth=0.04, fillstyle=solid,fillcolor=black, dimen=outer](14.99,-1.4360001)(0.15,0.15)
\psline[linecolor=black, linewidth=0.04](15.06,-1.406)(15.78,-1.406)
\psline[linecolor=black, linewidth=0.04, linestyle=dotted, dotsep=0.10583334cm](15.78,-1.406)(16.4,-1.406)
\psline[linecolor=black, linewidth=0.04](16.62,-1.406)(18.5,-1.406)
\psellipse[linecolor=black, linewidth=0.04, fillstyle=solid,fillcolor=black, dimen=outer](16.47,-1.4360001)(0.15,0.15)
\psellipse[linecolor=black, linewidth=0.04, fillstyle=solid,fillcolor=black, dimen=outer](17.19,-1.4360001)(0.15,0.15)
\psellipse[linecolor=black, linewidth=0.04, fillstyle=solid,fillcolor=black, dimen=outer](17.85,-1.4360001)(0.15,0.15)
\psellipse[linecolor=black, linewidth=0.04, fillstyle=solid,fillcolor=black, dimen=outer](18.51,-1.4360001)(0.15,0.15)
\psline[linecolor=black, linewidth=0.04, linestyle=dotted, dotsep=0.10583334cm](18.72,-1.406)(19.34,-1.406)
\psellipse[linecolor=black, linewidth=0.04, fillstyle=solid,fillcolor=black, dimen=outer](19.29,-1.4360001)(0.15,0.15)
\psline[linecolor=black, linewidth=0.04](19.44,-1.406)(20.16,-1.406)
\psellipse[linecolor=black, linewidth=0.04, fillstyle=solid,fillcolor=black, dimen=outer](20.05,-1.4360001)(0.15,0.15)
\psbezier[linecolor=black, linewidth=0.04](15.06,-1.406)(15.48,-1.106)(15.84,-0.866)(17.46,-0.8660000000000002)(19.08,-0.866)(19.62,-1.046)(19.98,-1.466)
\rput[bl](9.86,2.246){$C^0$}
\rput[bl](3.68,-2.854){$C^1$}
\rput[bl](9.86,-2.854){$C^2$}
\rput[bl](17.5,-2.854){$C^d$}
\psline[linecolor=black, linewidth=0.04, fillstyle=solid, doubleline=true, doublesep=0.02, doublecolor=black](1.58,-1.402)(1.628,0.758)(1.628,0.758)
\psline[linecolor=black, linewidth=0.04, doubleline=true, doublesep=0.02, doublecolor=black](4.412,-1.45)(4.92,0.694)
\psline[linecolor=black, linewidth=0.04, doubleline=true, doublesep=0.02, doublecolor=black](6.62,-1.402)(11.228,0.758)
\psline[linecolor=black, linewidth=0.04, doubleline=true, doublesep=0.02, doublecolor=black](3.692,-1.402)(14.492,0.71)
\psellipse[linecolor=black, linewidth=0.04, fillstyle=solid,fillcolor=black, dimen=outer](2.5886314,0.7740905)(0.17692351,0.1723982)
\psline[linecolor=black, linewidth=0.04](7.388,-1.354)(2.588,0.758)
\psline[linecolor=black, linewidth=0.04](15.02,-1.402)(3.98,0.614)
\psline[linecolor=black, linewidth=0.04](12.476,-1.45)(12.332,0.71)
\psline[linecolor=black, linewidth=0.04](20.06,-1.354)(13.676,0.758)
\psline[linecolor=black, linewidth=0.04](17.9,-1.45)(19.82,0.662)
\psline[linecolor=black, linewidth=0.04](17.132,-1.402)(10.412,0.71)
\rput[bl](4.244,-2.074){$v^1_{t}$}
\rput[bl](17.564,-2.25){$\small v^d_{\frac{n}{d}-t}$}
\rput[bl](1.364,-2.074){$\small v^1_{0}$}
\rput[bl](6.344,-2.25){$\small v^1_{\frac{n}{d}-1}$}
\rput[bl](14.744,-2.074){$\small v^d_{0}$}
\rput[bl](19.964,-2.25){$\small v^d_{\frac{n}{d}-1}$}
\end{pspicture}
}
\caption{generalized Petersen graph $GP(n,k)$, where $d=GCD(n,k)$.}
\end{figure}
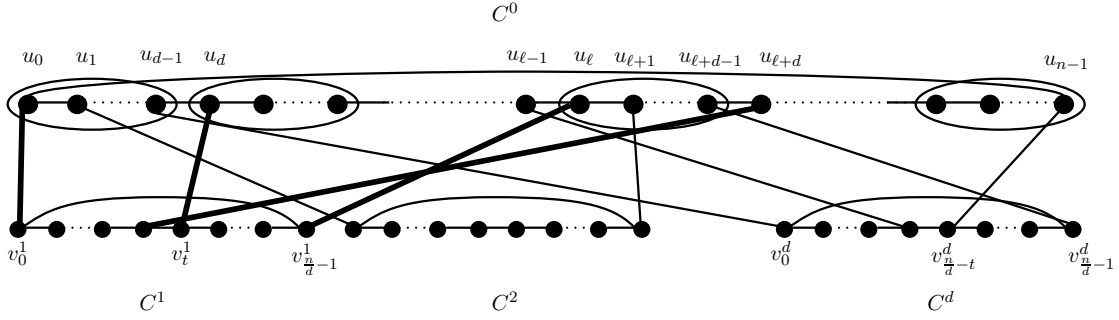

\begin{lemma}\label{lem:n/d5}
Let $n\geq 2k$  and $d=GCD(n,k)$. If $\frac{n}{d}\in\{2,5\}$, then $\chi^\prime_s(GP(n,k))\leq 5$.
\end{lemma}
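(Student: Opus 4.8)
The plan is to prove the bound by explicit construction, splitting on the value of $\frac{n}{d}$. A preliminary observation streamlines the work: writing $k=md$ with $m\geq 1$, the hypothesis $k\leq n/2$ forces $m=1$ when $\frac{n}{d}=2$, so that $GP(n,k)=GP(2k,k)$ with $t=1$; and it allows only $m\in\{1,2\}$ when $\frac{n}{d}=5$, so that $(k,t)\in\{(d,1),(2d,3)\}$. Hence only a few structural types occur, and in each one I would exhibit a $5$-coloring that is periodic around the graph and then check directly that it is a proper star edge coloring.

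For $\frac{n}{d}=2$ the graph $GP(2k,k)$ is genuinely subcubic: every inner vertex $v_i$ has degree two, incident only to its spoke $u_iv_i$ and to the single matching edge $v_iv_{i+k}$. I would first give a $3$-star edge coloring of the outer cycle $C^0$ using colors $\{1,2,3\}$, which is permissible because its length $2k\neq 5$, for instance via the periodic pattern of Lemma~\ref{lem1}, and then color the $n$ spokes and the $k$ matching edges, keeping colors $\{4,5\}$ in reserve for the matching. Since the inner vertices have degree two, the only new length-four configurations run through a spoke, then the matching edge, then another spoke (together with at most one outer edge at each end), so it suffices to choose the spoke colors at the two ends $u_i,u_{i+k}$ of each matching edge, and the color of that edge, so that no such $4$-path or $4$-cycle becomes bicolored; the freedom provided by colors $4,5$ makes this arrangeable.

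For $\frac{n}{d}=5$ each inner cycle $C^i$ is a $5$-cycle, which is precisely the length excluded from Lemmas~\ref{lem1}--\ref{lem3}; the extension machinery therefore does not apply and I must prescribe the color sequence on every $C^i$ by hand. My plan is to fix a single periodic template: a repeating color pattern on the arcs of $C^0$ cut out by the connector edges $u_{rd-1}u_{rd}$, a matching pattern on the spokes entering each arc, and, on each inner $5$-cycle, a choice of its five edge colors avoiding the two incident spokes and the two neighbouring inner edges, for which the $5$-cycle leaves enough room. The two sub-cases $k=d$ and $k=2d$ differ only in the cyclic order in which the spokes of a fixed $C^i$ meet $C^0$ (recorded by $t=1$ versus $t=3$), so I would tabulate the two spoke-orderings separately while reusing the same local templates; the small base instances $GP(5,1)$ and $GP(5,2)$, namely the $d=1$ case in which $C^0$ is itself a $5$-cycle, I would color outright.

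The main obstacle is the verification rather than the construction. Because the inner $5$-cycles are the forbidden length, I cannot invoke Lemmas~\ref{lem1}--\ref{lem3} and must exclude bicolored $4$-paths and $4$-cycles by direct inspection. The two delicate regions are the connector edges $u_{rd-1}u_{rd}$, where the spokes $s^1_r$ and $s^d_{r-t}$ reaching two different inner cycles lie close, and the seam at which the periodic pattern must reconnect with its starting block after wrapping once around $C^0$ of length $n=5d$; securing this closure may require distinguishing residues of $d$ modulo a small integer. I would reduce the whole verification to a finite check on one fundamental block, confirming properness at every incident vertex and the absence of a bicolored length-four configuration through each spoke, each inner edge, and each connector, whereupon periodicity of the construction yields the conclusion for all of $GP(n,k)$.
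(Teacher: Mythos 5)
Your structural reductions are correct ($k=d$, $t=1$ when $\frac{n}{d}=2$; $k\in\{d,2d\}$ when $\frac{n}{d}=5$), but both halves of the argument have genuine gaps. For $\frac{n}{d}=2$, your enumeration of the ``new'' length-four configurations is wrong: the paths spoke--outer--outer--spoke (e.g.\ $v_iu_i,\,u_iu_{i+1},\,u_{i+1}u_{i+2},\,u_{i+2}v_{i+2}$) and outer--outer--outer--spoke avoid the matching entirely, so they are invisible to your selection rule, which only constrains colors via paths through a matching edge. Since you reserve $\{4,5\}$ for the matching edges, nothing stops a spoke from taking a color in $\{1,2,3\}$; if four consecutive outer edges are colored $3,1,2,3$ (as happens in the natural repeating pattern), then choosing the spoke at $u_i$ equal to $f(u_{i+1}u_{i+2})=2$ and the spoke at $u_{i+2}$ equal to $f(u_iu_{i+1})=1$ is proper, satisfies every constraint you stated, and yet produces the bicolored path $2,1,2,1$. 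The paper does the opposite of what you propose, and that is exactly what closes these cases: the two reserve colors go on the \emph{spokes} (alternating, so the two spokes of each matching edge differ), and the matching edges then take any color missing from their neighborhoods; once every spoke color is disjoint from every outer color, all the configurations you omitted are automatically non-bicolored.

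For $\frac{n}{d}=5$ you correctly identify that Lemmas~\ref{lem1}--\ref{lem3} are unavailable, but you never produce the template: both the construction and its verification are deferred to ``a finite check on one fundamental block,'' and the one local rule you do state --- color each inner edge avoiding its two incident spokes and its two neighbouring inner edges --- guarantees only properness, not the star property. A proper coloring of a 5-cycle such as $a,b,c,a,b$ contains the bicolored 4-path $a,b,a,b$, and spoke--inner--inner--spoke paths are likewise unconstrained by your rule, so ``leaves enough room'' is unjustified. The idea that actually makes this case work in the paper is to color $C^0$ cyclically with all five colors $0,1,2,3,4$ --- the period $5$ divides $n=5d$, so the seam problem you anticipate never arises --- and to give the spoke at $u_i$ the unique color absent from the four $C^0$-edges within distance two of $u_i$. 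When $5\nmid k$ the five spokes entering each inner 5-cycle then receive five pairwise distinct colors, and each inner edge is forced to the unique color not appearing on the four spokes within distance two of it; this forced coloring is then checkably star (a block-shifted pattern on $C^0$ handles $5\mid k$). Without an explicit device of this kind, your plan for the inner 5-cycles is a statement of intent rather than a proof.
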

\begin{proof}
{
First, suppose that $\frac{n}{d}=5$ ($n=5d$). If $k$ is a multiple of 5, then $d$ is also a multiple of 5,  and 
 we color subpaths of $C^0$ with the following patterns. Otherwise, we color $C^0$ with colors 0,1,2,3,4, repeatedly. 
\begin{align*}
&f(u_{n-1},u_{0},\ldots,u_{d-1})=0,1,2,3,4,\ldots,0,1,2,3,4,\\
  &f(u_{d-1},u_{d},\ldots,u_{2d-1})=1,2,3,4,0\ldots,1,2,3,4,0,\\
  &f(u_{2d-1},u_{2d},\ldots,u_{3d-1})=2,3,4,0,1\ldots,2,3,4,0,1,\\
    &f(u_{3d-1},u_{3d},\ldots,u_{4d-1})=3,4,0,1,2\ldots,3,4,0,1,2,\\
     &   f(u_{4d-1},u_{4d},\ldots,u_{n-1})=4,0,1,2,3\ldots,4,0,1,2,3.
\end{align*}
For every $0\leq i\leq n-1$, let $F_2(u_i)=\{f(u_{i-2}u_{i-1}),f(u_{i-1}u_{i}),f(u_{i}u_{i+1}), f(u_{i+1}u_{i+2})\}$.
It is easy to see that for every $0\leq i\leq n-1$, $F_2(u_i)$, $F_2(u_{i+k})$,..., $F_2(u_{i+4k})$ are pairwise different.
If we  choose a color of $\{0,1,2,3,4\}\setminus F_2(u_i)$ for the spoke incident to $u_i$, then five spokes with  different colors are incident to each  $C^j$, $1\leq j\leq d$. For every edge of $C^j$, we choose the color which does not appear in the  spokes in the distance at most two from it. Thus, we have a 5-star edge coloring  of $GP(n,k)$, as desired.

Now, suppose that $\frac{n}{d}=2$.
In this case, $n=2d$, $k=d$, and $C^1,\ldots,C^d$ are 1-paths. We first present an arbitrary 3-star edge coloring of  $C^0$, and  color spokes as follows. For every $1\leq i\leq d$,  we  set 
\[f(s^i_{0})=3+(i\mod{2}),~~
f(s^i_{1})=3+(i+1\mod{2}).\]
Obviously, two spokes in the distance two, have different colors, except for $s^1_{0},s^d_{1}$ and $s^1_{1},s^d_{0}$, when $ d$ is even. Thus,  it suffices to color $C^1$ and $C^d$ such that
 \[\{f(v^1_0v^1_1),f(v^d_0v^d_1)\}\cap \{f (u_0u_{n-1}),f(u_{d-1}u_{d})\}=\emptyset,\]
 which is possible, obviously.
 Now, to complete the star edge coloring,  it suffices to  choose an arbitrary color from $\{0,1,2,3,4\}\setminus\{f(s^i_0),f(s^i_1)\}$  for $C^i$, $2\leq i\leq d-1$. 
}
\end{proof}

\begin{theorem}\label{th:main}
If $GP(n,k)$ with $n\geq 2k$ is a generalized Petersen graph which $GCD(n,k)\geq 3$, then $\chi^\prime_s(GP(n,k))\leq 5$.
\end{theorem}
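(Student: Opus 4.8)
The plan is to use the decomposition set up above: $GP(n,k)$ is the union of the outer $n$-cycle $C^0$, the $d$ inner cycles $C^1,\dots,C^d$ (each of length $n/d$), and the $n$ spokes. I would reserve two of the five colors, say $4$ and $5$, exclusively for the spokes, and color all of $C^0,C^1,\dots,C^d$ with the remaining three colors $\{a,b,c\}$. The cases $n/d\in\{2,5\}$ are already settled by Lemma~\ref{lem:n/d5}, so I may assume $n/d\ge 3$ and $n/d\neq 5$; since $d\ge 3$ forces $n\ge 2k\ge 2d\ge 6$, the outer cycle also has length $\neq 5$. The point of this split is that the spoke colors are disjoint from the cycle colors, so any coloring of this shape is automatically proper, and a short check shows that a bicolored $4$-path can only be of two kinds: (i) one lying entirely inside a single cycle, or (ii) a ``diagonal'' path consisting of a cycle edge, a spoke, a cycle edge, and a spoke. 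Kind (i) is eliminated simply by making the coloring of each cycle a $3$-star edge coloring, which is possible because every cycle here has length $\neq 5$ (this is exactly the content of Lemma~\ref{lem1}).

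The real work lies in eliminating the diagonal paths of kind (ii). Such a path uses two spokes together with the cycle edge joining their two endpoints that lie on a common cycle; unfolding the definitions, the two spokes are either $s^j_r,s^j_{r+1}$ (consecutive on an inner cycle, joined through an inner edge) or two spokes at consecutive outer vertices $u_i,u_{i+1}$ (joined through an outer edge, and, since $d\ge 3$, landing on distinct inner cycles). In either case the path fails to be bicolored as soon as the two participating spokes receive different colors. So the first goal is a $2$-coloring $g$ of the spokes, indexed by their outer endpoint $j\in\{0,\dots,n-1\}$, for which $g(j)\neq g(j+1)$ and $g(j)\neq g(j+k)$ for all $j$. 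When $n$ is even and $k$ is odd the assignment $g(j)\equiv j\pmod 2$ does exactly this, and the cycles may then be colored independently by Lemma~\ref{lem1}.

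In the remaining parity cases (for instance $k$ even, or $n$ odd) no $2$-coloring can separate every adjacent pair of spokes, so a bounded number of \emph{conflicts} --- adjacent spokes forced to share a color --- is unavoidable. Here I would localize the conflicts and repair them through the cycle colorings rather than the spokes: around each conflict I prescribe, via Lemma~\ref{lem3}, the colors of the two cycle edges incident to the offending spoke endpoints so that at least one ``rung'' of the would-be bicolored diagonal path carries the wrong color, and then extend these prescribed partial colorings to full $3$-star edge colorings of the cycles using Lemmas~\ref{lem2} and~\ref{lem:7-path} together with Process~1 and Process~2. Lemma~\ref{lem3} is tailored for precisely this task, since it produces a $3$-star coloring of a cycle with prescribed color pairs at two chosen vertices and a prescribed color on one further edge --- exactly the data needed to pin down the rungs at the conflicts.

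The main obstacle I expect is the global bookkeeping. The colorings of $C^0$ and of the inner cycles are not independent; they are coupled precisely through the diagonal paths, and the coupling is long-range because of the connector edges $u_{rd-1}u_{rd}$, each of which ties a spoke ending on $C^1$ to a spoke ending on $C^d$ (through the parameter $t$ with $tk\equiv d\pmod n$), and because of the wrap-around spoke $s^1_{n/d-1}$ ending at $u_\ell$. Making all of the local prescriptions from Lemma~\ref{lem3} mutually compatible as one travels once around the graph, while respecting the parities of $n/d$ and of $t$, is the delicate part, and I expect it to force a case analysis according to $k\bmod 2$, $n/d\bmod 2$, and the residues of $n/d$ and $t$ modulo $3$. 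Once consistency is verified in each case, assembling the cycle colorings with the fixed spoke coloring yields a $5$-star edge coloring of $GP(n,k)$, as required.
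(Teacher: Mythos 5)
Your setup (outer cycle, inner cycles, spokes; spokes in one palette, cycles in another; classification of the dangerous bicolored $4$-paths) matches the paper's framework, and your easy case --- $n$ even, $k$ odd, spokes colored by the parity of their outer index, cycles colored independently via Lemma~\ref{lem1} --- is correct and complete. But beyond that case the proposal is a plan, not a proof: the entire content of the theorem in the remaining cases is exactly the ``global bookkeeping'' you defer with ``I expect it to force a case analysis'' and ``once consistency is verified in each case.'' Note also that the conflicts are not a bounded set to be repaired locally: with the natural parity coloring of spokes, either \emph{every} connector edge $u_{rd-1}u_{rd}$ joins two same-colored spokes or none does (this depends on the parity of $t-d$), and when $\frac{n}{d}$ is odd every inner cycle has a same-colored wrap-around pair; so there are $\Theta(\frac{n}{d}+d)$ coupled conflicts living on shared cycles, and the repairs prescribed by Lemma~\ref{lem3} on one cycle constrain those on $C^0$, which in turn constrain $C^1,\dots,C^d$ through all $n$ spokes. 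The paper's proof of precisely this part consists of Steps~1--6, the explicit coloring patterns of Tables~\ref{SC} and~\ref{EC}, and a separate Claim; none of that is routine, and your proposal does not supply a substitute for it.

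There is also a concrete obstruction to the framework you commit to. You insist that the spoke colors $\{4,5\}$ never appear on cycle edges, but the paper shows that within the natural spoke coloring this is impossible in general: for $\frac{n}{d}$ odd, $t\in\{1,\frac{n}{d}-1\}$ and $d=5$ or $d$ even, the forced partial colorings of the paths $P_1,\dots,P_4$ cannot be extended to $3$-star colorings of $Q_1$ and $Q_2$ with colors $\{0,1,2\}$ (this is ruled out by Lemma~\ref{lem:7-path}), and a similar forcing occurs for $d\in\{3,4\}$ in Step~5; in those cases exactly one edge of $C^0$ must receive a spoke color. Once that happens, your classification of bicolored $4$-paths into kinds (i) and (ii) --- which relies on the two palettes being disjoint --- breaks down: new potential bicolored paths arise consisting of a spoke, the exceptional outer edge, and edges of the neighboring inner cycles, and eliminating them requires the additional prescriptions on $C^2$ and $C^{d-1}$ that the paper carries out in Step~6 via Lemmas~\ref{lem2} and~\ref{lem3}. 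Your proposal has no mechanism for this situation, so even granting the deferred case analysis, the scheme as stated would reach a dead end in these cases.
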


\begin{proof}
{By Lemma~\ref{lem:n/d5}, it suffices to prove the theorem for $\frac{n}{d}\not\in \{2,5\}$.
In this case, we are going to provide a 5-star edge coloring of $GP(n,k)$ such  cycles $C^0,C^1,\ldots,C^d$ use colors of $\{0,1,2\}$ and spokes use colors of $\{3,4\}$, except  in a few cases which we have to use color 3 or 4 for  at most one edge of $C^0$. Thus, we consider  subgraphs of $GP(n,k)$ and color them step by step. Actually, we consider cycles $C^1,\ldots,C^d$ , spokes, connector edges, and subpaths $Q_1:=u_{n-1},u_0,\ldots,u_d$, $Q_2:=u_{\ell-1},u_{\ell},\ldots,u_{\ell+d}$, $Q_3=u_d,u_{d+1},\ldots,u_{\ell-1}$, and $Q_4=u_{\ell+d},u_{\ell+d+1},\ldots,u_{n-1}$  of $C_0$. Then, we provide a star edge coloring of each subgraph such that the total coloring of $GP(n,k)$ is a 5-star edge coloring.
We propose a 5-star edge coloring $f:E(G)\rightarrow\{0,1,\ldots,4\}$ of these subgraphs in 6 steps, as follows.
%\begin{enumerate}

\noindent{\bf Step 1.} Coloring the spokes.\\
We set
$f(s^i_{r})=3+(i+r\mod{2})$, for every $1\leq i\leq d$ and $0\leq r\leq \frac{n}{d}-1$.
The following facts for two spokes in the distance two hold.
\begin{itemize}
\item[(1)] For each $1\leq i\leq d$, spokes $s^i_{r}$ and $s^i_{r+1}$ receive the same color if and only if $\frac{n}{d}$ is odd and $r=\frac{n}{d}-1$.\label{f1}
\item[(2)] If $ 1\leq i<d$, then spokes $s^i_{r}$ and $s^{i+1}_{r}$ receive different colors.\label{f2}
\item[(3)] For every $r$, $0\leq r\leq \frac{n}{d}-1$, the distance of   $s^1_{r}$ and $s^d_{r-t}$ is two and  may be receive the same color. 
In other words, the adjacent spokes to each connector may be receive the same color. \label{f3}
%\item[(2)] $\frac{n}{d}$ is odd, $s=s_{i,0}$ and $s^\prime=s_{i,\frac{n}{d}-1}$, for $1\leq i\leq d$. 
\end{itemize}

In Table~\ref{SC}, we present the colors of some spokes which are adjacent to the connectors used in Step~4. More precisely, spokes $s_0^d$ and $s_t^1$ are  adjacent to  $u_{d-1}u_d$, spokes $s_{\frac{n}{d}-1-t}^d$ and $s_{\frac{n}{d}-1}^1$ are adjacent to  $u_{\ell-1}u_\ell$, spokes $s_{\frac{n}{d}-1}^d$ and $s_{t-1}^1$ are adjacent to $u_{\ell+d-1}u_{\ell+d}$, and spokes $s_{\frac{n}{d}-t}^d$ and $s_{0}^1$ are adjacent to $u_{n-1}u_0$ (see Figure~\ref{fig1}). Note that $GCD(t,\frac{n}{d})=1$, and if $\frac{n}{d}$ is even, tnen $t$ is odd.

\begin{table}[H]
\centering
\caption{Colors of some spokes.}
\label{SC}
\begin{tabular}{|c|c|c|c|c|c|c|c|c|c|c|}
\hline
 {\small $\frac{n}{d}$} & \small{$d$} & \small{$t$} & \small{$f(s^1_{0})$} & \small{$f(s^d_{0})$} & \small{$f(s^1_{t})$} & \small{$f(s^d_{\frac{n}{d}-1-t})$} & \small{$f(s^1_{\frac{n}{d}-1})$} & \small{$f(s^d_{\frac{n}{d}-1})$} & \small{$f(s^1_{t-1})$} & \small{$f(s^d_{\frac{n}{d}-t})$} \\ \hline 
\multirow{4}{*}{\small odd} & \multirow{2}{*}{\small odd} & {\small odd} &4 & 4 & 3 & 3 & 4 & 4 & 4 & 4  \\ \cline{3-11}

  &   & {\small even} &  4 &4 & 4 & 4 & 4 & 4 & 3 & 3 
   \\ \cline{2-11}
   & \multirow{2}{*}{\small even}  & {\small odd} & 4 & 3 & 3 & 4 & 4 & 3 & 4 & 3    \\ \cline{3-11}
     &   & {\small even} & 4 & 3 & 4 & 3 & 4 & 3 & 3 & 4 
     \\ \hline
     
     \multirow{2}{*}{\small even} & \multirow{1}{*}{\small odd} & {\small odd} & 4 & 4 & 3 & 4 & 3 & 3 & 4 & 3 \\ \cline{2-11}
%      &   & even &4 & 4 & 4 & 3 & 3 & 3 & 3 & 4 
       & \multirow{1}{*}{\small even}  & {\small odd} &4 & 3 & 3 & 3 & 3 & 4 & 4 & 4    \\ \hline
%         &   & even & 
%         4 & 3 & 4 &  4& 3 & 4 & 3 & 3  \\ \hline 
\end{tabular}
\end{table}

\noindent{\bf Step 2.} Coloring of $C^1$  and $C^d$.\\
Consider  two subpaths $P:v^1_{t-2}v^1_{t-1}v^1_{t}v^1_{t+1}$ and $P^\prime:v^1_{\frac{n}{d}-2}v^1_{\frac{n}{d}-1}v^1_{0}v^1_{1}$ of $C^1$. Set $f(P)=0,1,2$ and 
\begin{align*}
f(P^\prime)=\begin{cases}
1,0,1~~~t=1,\\
1,2,1~~~t=\frac{n}{d}-1,\\
2,1,0~~~t=2~\text{or}~t=\frac{n}{d}-2,\\
0,2,1~~~\text{otherwise}.
%1,0,2~~~\text{otherwise}.\\
\end{cases}
\end{align*}
By  Lemma~\ref{lem1}, it is easy to see that these partial edge colorings of $C^1$ could be extended to a 3-star edge coloring of $C^1$. We color  $C^d$ such that $f(v^d_{i}v^d_{i+1})=f(v^1_{i+t}v^1_{i+1+t})$. Thus, $C^1$ and $C^d$ admits a 3-star edge coloring using $\{0,1,2\}$. 

\noindent{\bf Step 3.} Coloring the connector edges  adjacent to the spokes with  the same color.\\
Let $e=u_{rk-1}u_{rk}$,  $0\leq r\leq \frac{n}{d}-1$, be a connector edge which its adjacent spokes have the same color (i.e.  $f(s^1_r)=f(s^d_{r-t})$). To avoid any bicolored 4-path or 4-cycles, we should have 
%We choose the color of $e$, from $\{0,1,2\}\setminus \mathcal{F}(v^1_r)$.
%If adjacent spokes to connector edge $e$ have the same color (i.e.  $f(s^1_r)=f(s^d_{r-t})$), then we should have
\begin{equation}\label{eq1}
f(e)\not\in\mathcal{F}_{C^1}(v^1_r)\cup\mathcal{F}_{C^d}(v_{r-t}^d).
\end{equation}
We choose the color of $e$, from $\{0,1,2\}\setminus \mathcal{F}_{C^1}(v^1_r)$. Since we provide the coloring of $C^1$ in a clockwise $t$-shift pattern of the edge coloring of $C^d$,   $\mathcal{F}_{C^1}(v^1_{r})=\mathcal{F}_{C^d}(v^d_{r-t})\in\{0,1,2\}$, and condition (\ref{eq1}) holds.

Moreover, if $\frac{n}{d}$ is odd, then   $f(s^{1}_0)=f(s^1_{\frac{n}{d}-1})$ and $f(s^{d}_0)=f(s^d_{\frac{n}{d}-1})$. Hence, we should have 
\begin{align}
&f(v^1_0v^1_{\frac{n}{d}-1})\not\in\{f(u_{n-1}u_0),f(u_{\ell-1}u_{\ell})\},\label{e2}\\
&f(v^d_0v^d_{\frac{n}{d}-1})=f(v^1_{t-1}v^1_{t})\not\in\{f(u_{d-1}u_{d}),f(u_{\ell+d-1}u_{\ell+d})\}.\label{e3}
\end{align} 
 Because of coloring patterns of $P$ and $P^\prime$,   (\ref{e2}) and (\ref{e3}) are also hold. 

%%%%%%%%%%%%%%%%%%%%%%%%%%%%%%%%%%%
%%%%%%%%%%%%%%%%%%%%%%%%%%%%%%%%%%%%5
%%%%%%%%%%%%%%%%%%%%%%%%%%%%%%%%%%%%%%55
%%%%%%%%%%%%%%%%%%%%%%%%%%%%%%%%%%%%%%%%55

\noindent{\bf Step 4.} Coloring of paths $Q_1:=u_{n-1},u_0,\ldots,u_d$, and $Q_2:=u_{\ell-1},u_{\ell},\ldots,u_{\ell+d}$.\\
In this step, based on the parity of $\frac{n}{d}$, we consider two cases.

\noindent $\bullet$ Case 1: $\frac{n}{d}$ is odd.\\
If $\frac{n}{d}$ is odd, then $f(s^i_{0})=f(s^i_{\frac{n}{d}-1})$,  for every  $1\leq i\leq d$. To avoid any bicolored 4-path (or 4-cycle),  we should have 
\begin{equation}\label{l:first}
f(v^i_0v^i_{\frac{n}{d}-1})\not \in \mathcal{F}_{C^0}(u_{i-1})\cup\mathcal{F}_{C^0}(u_{\ell+i-1}).
\end{equation} 
 Also, we try not to use colors 3 and 4 as much as possible for the star edge coloring of $Q_1$  and $Q_2$ (at most once if necessary).
  For this purpose, it is sufficient to color $Q_1$  and $Q_2$ as similar as possible (except for at most one difference which is due to the use of color 3 or 4).
  
  We first determine the  coloring of the edges of $C^0$ which are adjacent to spokes  $s^1_{0}$, $s^1_{\frac{n}{d}-1}$,	 $s^d_{0}$, $s^d_{\frac{n}{d}-1}$.
 The edges of  paths  $P_1=u_{n-1},u_0,u_1$, $P_2=u_{d-2}u_{d-1}u_d$, $P_3=u_{\ell-1},u_{\ell},u_{\ell+1}$, and $P_4=u_{\ell+d-2},u_{\ell+d-1},u_{\ell+d}$ are adjacent to these spokes.
 Obviously, we should consider the colorings of $C^1$, $C^d$ and some spokes in  Table~\ref{SC}, to avoid creating any bicolored 4-path (4-cycle) after coloring the edges of $P_1$,$P_2$,$P_3$, and $P_4$. In addition, it must be noted that the obtained partial coloring could be extended  to a star edge coloring of  $Q_1:=u_{n-1},u_0,\ldots, u_d$ and $Q_2:=u_{\ell-1},u_\ell,\ldots,u_{\ell+d}$, using colors $\{0,1,2\}$. 
  In Table~\ref{EC}, we determine the coloring of $P_1$, $P_2$, $P_3$, and $P_4$ such that 
\begin{align}\label{const-2}
f(v^1_0v^1_{\frac{n}{d}-1})\not \in \mathcal{F}_{C^0}(u_{0})\cup\mathcal{F}_{C^0}(u_{\ell}), ~~f(v^d_0v^d_{\frac{n}{d}-1})\not \in \mathcal{F}_{C^0}(u_{d-1})\cup\mathcal{F}_{C^0}(u_{\ell+d-1}),
\end{align}
and 
\begin{align*}
%&f(v^1_{0}v^1_{\frac{n}{d}-1})\not\in(\mathcal{F}(u_0)\cup\mathcal{F}({u_\ell})),\\
%&f(v^1_{0}v^1_{\frac{n}{d}-1})=1\not\in(\mathcal{F}(u_{d-1})\cup\mathcal{F}({u_{\ell+d-1}})),\\
&f(P_1)=f(P_3)~\text{or different in at most one connector color which is 3 or 4},\\
&f(P_2)=f(P_4)~\text{or different in at most one connector color which is 3 or 4}.
\end{align*}
 Note that exactly one connector is colored from each set of $\{u_0u_{n-1},u_{\ell-1}u_\ell\}$ and $\{u_{d-1}u_d,u_{\ell+d-1}u_{\ell+d}\}$ in Step~3 (see the colors of their adjacent spokes form Table~\ref{SC}). Therefore,  the above conditions could be fulfilled, easily.
 
In Table~\ref{EC}, for $t\in\{1,\frac{n}{d}\}$  and $d=5$ or  $d$ is even, we are forced to use color  3 or 4.
% If we do not use colors 3 and 4, then the extension of the partial star edge coloring of $Q_1$  or $Q_2$ is not possible or their extension pattern is different and we could not provide a 3-star edge coloring with $\{0,1,2\}$ of some cycle $C^i$, $2\leq i\leq d-1$. 
Actually, if we do not use any color of $\{3,4\}$ and just consider  constraints~(\ref{eq1}),(\ref{e2}),(\ref{e3}) for coloring $P_1$, $P_3$, $P_3$ and $P_4$, then we have

\begin{flalign*}
&f(P_1)=f(P_3)=\begin{cases}
2,1~~~~t=1, (d=5~\text{or}~ d~\text{is even})\\
0,1~~~~t=\frac{n}{d}-1, (d=5~\text{or}~d~\text{is even}).
%2,1~~~~d~\text{is even}, t=1.\\
%0,1~~~~d~\text{is even}, t=\frac{n}{d}-1.
\end{cases}
\\
%\end{flalign*}
%\begin{flalign*}
&f(P_2)=\begin{cases}
0,2~~~~t=1,d=5,\\
2,0~~~~(t=\frac{n}{d}-1, d=5)~\text{or}~( t\in\{1,\frac{n}{d}-1\}, d~\text{is even}).
%2,0~~~~d~\text{is even}, t\in\{1,\frac{n}{d}-1\}.
\end{cases}
\\
\vspace*{1cm}
%\end{flalign*}
&f(P_4)=\begin{cases}
f(P_2)~~~~t\in\{1,\frac{n}{d}-1\},d=5\\
0,2~~~~t\in\{1,\frac{n}{d}-1\},d~\text{is even}.
\end{cases}
\end{flalign*}

For $d=5$, by Lemma~\ref{lem:7-path}, it is impossible to extend  this partial coloring to a 3-star edge coloring of $u_{n-1},u_0,\ldots, u_5$ and $u_{\ell-1},u_\ell,\ldots,u_{\ell+5}$ with colors $\{0,1,2\}$. Thus, we are forced to use color 3 in the partial edge colorings of $Q_1$ and $Q_2$.
For even $d$, since $f(P_2)\neq f(P_4)$, the extensions of partial edge colorings of $Q_1$ and  $Q_2$ are different. Hence, for some $i$, $2\leq i\leq d-1$, $\mathcal{F}_{C^0}(u_{i-1})\neq \mathcal{F}_{C^0}(u_{\ell+i-1})$, 
$\{0,1,2\}\setminus( \mathcal{F}_{C^0}(u_{i-1})\cup \mathcal{F}_{C^0}(u_{\ell+i-1}))=\emptyset$,  and there is no color for  $v^i_0v^i_{\frac{n}{d}-1}$ to satisfies condition~(\ref{l:first}). For this reason, we use color 4 in the partial edge colorings of $Q_1$ and $Q_2$.

\noindent $\bullet$ Case 2: $\frac{n}{d}$ is even.\\
For even $\frac{n}{d}$,  $f(s^i_{0})\neq f(s^i_{\frac{n}{d}-1})$,  for $1\leq i\leq d$. Thus, our constraints for the partial edge coloring of $Q_1$ and $Q_2$ is less. We determine the color of edges with symbol (?) in Table~\ref{EC} in such a way, after applying  Process 1 or 2 (coloring functions $f_1$ or $f_2$) to extend  the partial edge colorings of $Q_1$ and $Q_2$, we obtain a 3-star edge coloring with the following properties. 
\begin{enumerate}
\item If $|\mathcal{F}_{C^0}(u_{d-2}u_{d-1})|\neq 1$, then  $|\mathcal{F}_{C^0}(u_{\ell+d-2}u_{\ell+d-1})|\neq 1$.
\item If  $|\mathcal{F}_{C^0}(u_{d-2}u_{d-1})|=1$, then  $|\mathcal{F}_{C^0}(u_{\ell}u_{\ell+1})|\neq 1$.
\end{enumerate} 
 These properties is useful in Step~5 to extend the star edge coloring of uncolored subpaths $Q_3=u_d,u_{d+1},...,u_{\ell-1}$ and $Q_4=u_{\ell+d},u_{\ell+d+1},...,u_{n-1}$ of $C^0$    with the the same direction.

 \begin{table}[H]
\footnotesize
\centering
%\caption{\begin{varwidth}[t]{\linewidth}The colorings of $P_1=u_{n-1},u_0,u_1$, $P_2=u_{d-2}u_{d-1}u_d$,\\ $P_3=u_{\ell-1},u_{\ell},u_{\ell+1}$, $P_4=u_{\ell+d-2},u_{\ell+d-1},u_{\ell+d}$.\end{varwidth}}
\caption{ \small Colorings of   $P_1=u_{n-1},u_0,u_1$, $P_2=u_{d-2}u_{d-1}u_d$}{~~~~~~~~~\small  $P_3=u_{\ell-1},u_{\ell},u_{\ell+1}$, $P_4=\nolinebreak u_{\ell+d-2},u_{\ell+d-1},u_{\ell+d}$.}
\label{EC}
\begin{tabular}{|c|c|c|c|c|c|c|c|c|c|}

\hline
{\footnotesize $\frac{n}{d}$} & {\footnotesize $d$} & \multicolumn{2}{|c|}{\footnotesize{$t$}} &{ \footnotesize $f(P_1)$}  &{\footnotesize $f(P_2)$} & {\footnotesize $f(P_3)$} &{\footnotesize $f(P_4)$} \\ \hline 

 \multirow{12}{*}{{\footnotesize odd}} & \multirow{6}{*}{\footnotesize odd} & \multirow{3}{*}{\footnotesize odd}& {\footnotesize $t= 1$, $d=5$}&
3,1 & 0,2 & 2,1 &  0,3\\ \cline{4-8}
 &  &  & {\footnotesize $t=1$, $d\neq 5$} & 
  2,1 &0,2 & 2,1 & 0,2    \\ \cline{4-8}

  &  &  & {\footnotesize $t=\frac{n}{d}-2$} & 
  2,0 & 0,2 & 2,0 &0,2    \\ \cline{4-8}
   &  &  & {\footnotesize $t\neq 1,\frac{n}{d}-2$} & 
  0,1 & 0,2 & 0,1 & 0,2    \\ \cline{3-8}
  
   &   & \multirow{3}{*}{\footnotesize even} & {\footnotesize{$t=2$, $t\neq\frac{n}{d}-1$}} & 
        0,2 & 2,0 & 0,2&2,0    \\ \cline{4-8}
        &   &  & {\footnotesize{$t= \frac{n}{d}-1$}, $d= 5$} & 
        0,1 & 2,3 & 3,1 & 2,0  \\ \cline{4-8}
        &   &  & {\footnotesize{$t= \frac{n}{d}-1$}, $d\neq 5$} & 
        0,1 & 2,0 & 0,1& 2,0    \\ \cline{4-8}
        &   & &{\footnotesize $t\neq2, \frac{n}{d}-1$} & 1,0 & 2,0 & 1,0 & 2,0 
  \\ \cline{2-8}

   & \multirow{6}{*}{\footnotesize even}  & \multirow{3}{*}{\footnotesize odd}& {\footnotesize $t= 1$} &
  2,1 & 0,4 & 2,1 & 0,2    \\ \cline{4-8}
    &  &  & {\footnotesize $t=\frac{n}{d}-2$} & 
  0,2 &  2,0 & 0,2 &2,0    \\ \cline{4-8}
    &  & & {\footnotesize$t\neq1,\frac{n}{d}-2$} &   1,0 & 2,0 & 1,0 & 2,0  \\ \cline{3-8}
     &   & \multirow{3}{*}{\footnotesize even}&  {\footnotesize $t\neq \frac{n}{d}-1,2$} & 0,1 & 0,2 & 0,1 & 0,2
     \\ \cline{4-8}
     &   & & {\footnotesize{$t= 2$}, $t\neq\frac{n}{d}-1$ } & 
       2,0 & 0,2&2,0& 0,2    \\ \cline{4-8}
        &   & & {\footnotesize{$t= \frac{n}{d}-1$}} & 
       0,1 & 2,0 & 0,1& 2,4    \\ \hline
        
     \multirow{5}{*}{\footnotesize even} & \multirow{1}{*}{\footnotesize odd} & {\footnotesize odd}& & 2,1  & ?,0 & 0,1& ?,2     \\ \cline{2-8}

%      &   & \multirow{1}{*}{\footnotesize even} &{\footnotesize } &0,1 & ?,0 & 1,0&?,2 \\ \cline{2-8}
%       &  & & {\footnotesize $t=2$} & 2,1  & ?,0 & 0,1 & ?,2  \\ \cline{4-8}
%       &  & & {\footnotesize $t=\frac{n}{d}-2$} & 2,1 & ?,0 & 0,1 & ?,2  \\ \cline{2-8}

       & \multirow{3}{*}{\footnotesize even}  & \multirow{3}{*}{\footnotesize odd}& {\footnotesize $t\neq \frac{n}{d}-1,1$} & 0,1 & ?,0 & 1,2 & ?,2   \\ \cline{4-8}
            &  & & {\footnotesize $t=1$} & 2,0 & ?,0 & 2,1 & ?,2  \\ \cline{4-8}
                 & & & {\footnotesize $t=\frac{n}{d}-1$} & 
               0,1  & ?,0 & 0,2 & ?,2   \\ 
                \hline
               
%               
%         &   & {\footnotesize even} & & 2,1  & ?,0 & 0,1& ?,2     \\ \hline

\end{tabular}
\end{table}

% Furthermore, we try to use colors of $\{3,4\}$ If we want to use colors of $\{0,1,2\}$ to extend the star edge coloring of $Q_1$ nd $Q_2$, we should  this purpose it suffices to color paths  $Q_1:=u_{n-1},u_0,\ldots, u_d$ and $Q_2:=u_{\ell-1},u_\ell,\ldots,u_{\ell+d}$ with the same patterns.
  
%In Algorithm~\ref{q1q2}, we extend the given partial edge coloring of  $Q_1$ and $Q_2$, using colors $\{0,1,2\}$.
% 
% In lines~\ref{lf-q1}-\ref{l13-al1}, we construct a list $L$   that specifies the order in which we color the  uncolored edges of $Q_1$ and $Q_2$.
% More precisely, if $j$-th member of $L$ is equal to $i$, then $u_iu_{i+1}$ is the $j$-th edge colored in this algorithm. 
 We now extend the partial star edge coloring of $Q_1$ and $Q_2$.
 If $t=1$, then $u_{0}=u_{\ell+d}$ and $u_{n-1}u_0\in Q_1\cap Q_2$. Thus, the direction of coloring is clockwise starting  from the left uncolored edges in $Q_2$ by Process 2 and then $Q_1$ by Process 1.  For odd $\frac{n}{d}$,  we just  extend the coloring of  $Q_2$ by Process 2, and then apply the same coloring pattern  for uncolored edges of  $Q_1$.
%  ( lines~\ref{l3-al1} and \ref{l9-al1}).
 
 If $t\neq 1$, then $Q_1\cap Q_2=\emptyset$  or $u_{d-1}u_d\in Q_1\cap Q_2$,  for $t=\frac{n}{d}-1$,
  .
%  $t=\frac{n}{d}-1$, then $u_{d-1}=u_{\ell-1}$ and  $u_{d-1}u_d$ is the common edge in $q_1$ and $q_2$.
In these cases,  the direction of coloring is clockwise  and starting from the left uncolored edges in $Q_1$ by Process 2 and then $Q_2$ by Process 1.  For odd $\frac{n}{d}$,  we first extend the coloring of  $Q_1$ by Process 2 and then apply the same coloring pattern  for  uncolored edges of $Q_2$.
% (see lines~\ref{l5-al1} and \ref{l11-al1}).

  Note that,  for even $\frac{n}{d}$  the partial edge coloring of $Q_1$ and $Q_2$ in Table~\ref{EC} is not  in such a way that their extension can be  the same; hence we have to color $Q_1$ and $Q_2$,  separately.

\noindent{\bf Step 5.}  Coloring of $Q_3=u_d,u_{d+1},...,u_{\ell-1}$ and $Q_4=u_{\ell+d},u_{\ell+d+1},...,u_{n-1}$.\\
 According to the edge coloring of $Q_1$ and $Q_2$, we have two possibilities: $|\mathcal{F}_{C^0}(u_{d-2}u_{d-1})|=1$ or not.
 In the first case, by considering Table~\ref{EC} and the star edge coloring of $Q_1$ and $Q_2$, we have $|\mathcal{F}_{C^0}(u_0u_1)|\neq 1$, $|\mathcal{F}_{C^0}(u_{\ell}u_{\ell+1})|\neq 1$. 
 Thus, to avoid creating bicolored 4-path, we have unique choices for  $f(u_{d}u_{d+1})$ and may be $f(u_{\ell+d}u_{\ell+d+1})$. 
 For this reason, we start coloring  $Q_3$ and $Q_4$   from their left sides  and clockwise by Process 2. 
If $|\mathcal{F}_{C^0}(u_{d-2}u_{d-1})|\neq 1$, then $\mathcal{F}_{C^0}(u_{\ell+d-2}u_{\ell+d-1})|\neq 1$, and  we star coloring anti-clockwise from the right sides.
Considering the above direction, let $e\in Q_3$ and $e^\prime\in Q_4$ be the last connectors.
We first use Process 1 for uncolored edges of subpaths $Q_3\setminus\{e,e+1,\ldots,e+d-1\}$ and  $Q_4\setminus\{e^\prime,e^\prime+1,\ldots,e^\prime+d-1\}$.
If $d\geq 5$, then we use Process 2 for uncolored edges of subpaths $e,e+1,\ldots,e+d-1$ and $e^\prime,e^\prime+1,\ldots,e^\prime+d-1$.
If $d\in\{3,4\}$, then the color of  connectors $e$ and $e^\prime$ are crucial for the  extensibility of the partial edge coloring of $Q_3$ and $Q_4$ (Lemma~\ref{lem:7-path}). Thus, we consider the following states for the last connector $e_c\in\{e,e^\prime\}$.

\begin{itemize}
\item if $e_c$ is uncolored, then its adjacent spokes have different color. Hence,  
\begin{itemize}
\item if $d=3$, then choose $f(e_c)$ from $\{f(e_c+3),f(e_c+4)\}\setminus\{f(e_c-1)\}$.
\item if $d=4$, then choose $f(e_c)$ from $\{0,1,2\}\setminus\{f(e_c-1),f(e_c+5)\}$.
\end{itemize}
\item if $e_c$ is colored already, then its adjacent spokes have the same color such as $a$. Hence,  
\begin{itemize}
\item if $d=3$ and $f(e_c)\not\in \{f(e_c+3),f(e_c+4)\}$, then change  $f(e_c)$ to the color of   $\{3,4\}\setminus\{a\}$. 
\item if $d=4$ and  $f(e_c)=f(e_c+5)$, then change  $f(e_c)$ to the color of   $\{3,4\}\setminus\{a\}$. 
\end{itemize}
\end{itemize}
After that, we use Process 2  to extend the partial edge coloring of  $e,e+1,\ldots,e+d-1$ and $e^\prime,e^\prime+1,\ldots,e^\prime+d-1$, clockwise.
Since $f(e_c+1)\neq f(e_c-1)$,  path $e_c-1,e_c,e_c+1$ is not bicolored. 

\noindent{\bf Step 6.} Coloring of $C^i$, $2\leq i\leq d-1$.\\
If $\frac{n}{d}$ is odd, then  for every $1\leq i\leq d$,  
$f(v^i_0u_{i-1})=f(v^i_{\frac{n}{d}-1}u_{\ell+i-1})$, 
%\mathcal{F}_s(v^i_{0})=\mathcal{F}_s(v^i_{\frac{n}{d}-1})$ 
and $\mathcal{F}_{C^0}(u_{i-1})=\mathcal{F}_{C^0}(u_{\ell+i-1})$ (see Tables~\ref{SC}~and~\ref{EC}). Thus,  we choose $f(v^i_{0}v^i_{\frac{n}{d}-1})$ form $\{0,1,2\}\setminus \mathcal{F}_{C^0}(u_{i-1})$ to avoid any bicolored 4-path including spokes $s^i_0=v^i_0u_{i-1}$ and $s^i_{\frac{n}{d}-1}=v^i_{\frac{n}{d}-1}u_{\ell+i-1}$.

\noindent {\bf Claim}: at most one edge of $C^0$ is colored by 3 or 4.\\
In Step~5, we present a star edge coloring of $Q_3=u_d,u_{d+1},...,u_{\ell-1}$ and $Q_4=u_{\ell+d},u_{\ell+d+1},...,u_{n-1}$.
according to the direction of coloring of $Q_3$ and $Q_4$, the last colored connectors $e\in Q_3$ and $e^\prime\in Q_4$ may be either $\{u_{\ell-d-1}u_{\ell-d}, u_{n-d-1}u_{n-d}\}$ or $\{u_{2d-1}u_{2d},u_{\ell+2d-1}u_{\ell+2d}\}$. We prove that at most one connector of $\{e,e^\prime\}$ is  colored by 3 or 4, when $d\in\{3,4\}$. Note that 
\[\begin{cases}
Q_4=\emptyset~~~&t=1,\\
Q_3=\emptyset~~~&t=\frac{n}{d}-1,\\
Q_4=u_{n-d},\ldots,u_{n-1}~~~&2t=1,\\
Q_3=u_d,\ldots,u_{2d-1}~~~&2t=\frac{n}{d}-1.
\end{cases}\]
So, it is clear that in the above cases, $Q_3$  or $Q_4$ have no connector.
%If  $t\in\{1,\frac{n}{d}-1\}$, then $Q_3$ or $Q_4$ does not have any edge. Moreover, if $2t\in\{1,\frac{n}{d}-1\}$, then $Q_3$ or $Q_4$ are of length $d-2$ without any connector. 
Thus, let $2t\not \in\{1,2,\frac{n}{d}-2,\frac{n}{d}-1\}$.

In Step~5,  we change the color of  the last connector  $e_c\in\{e^\prime,e\}$ to 3 or 4, if  its adjacent spokes have the same color  and one of the following conditions holds.
\begin{itemize}
\item[(i)] $d=3$ and $f(e_c)\not\in \{f(e_c+3),f(e_c+4)\}$.
\item[(ii)] $d=4$ and  $f(e_c)=f(e_c+5)$.
\end{itemize}
 Hence, assume that the color of adjacent spokes to $e_c$  are the same. We show that   for $d=3$, $\{f(e+3),f(e+4)\}=\{f(e^\prime+3),f(e^\prime+4)\}$ and for $d=4$,  $f(e+5)=f(e^\prime+5)$.
 For this purpose, first suppose that $\frac{n}{d}$ is odd. If $d=3$, then $P_1=e^\prime+3,e^\prime+4$, $P_3=e+3,e+4$ either or $P_2=e+3,e+4$, $P_4=e^\prime+3,e^\prime+4$. Hence, by Table~\ref{EC}, $\{f(e+3),f(e+4)\}=\{f(e^\prime+3),f(e^\prime+4)\}$.
 If $d=4$, then $e^\prime+5=u_0u_1$, $e+5=u_\ell u_{\ell+1}$ either or $e+5=u_{2}u_{3}$, $e^\prime+5=u_{\ell+2} u_{\ell+3}$. By Table~\ref{EC}, we also have  $f(e+5)=f(e^\prime+5)$.
 
 Now,  let $\frac{n}{d}$ be even.
If $d=3$, then $t$ should be odd ($t$ even is impossible). In this case,  $e=u_{2d-1}u_{2d}$ and $e^\prime=u_{\ell+2d-1}u_{\ell+2d}$.  Moreover, $e$ and $e^\prime$ are adjacent to the spokes of $\{s^0_{2t},s^d_{t}\}$ and $\{s^0_{2t-1},s^d_{t-1}\}$, respectively. Since the parities of  indices of each pair of spokes are different, they have different colors (see Step~1). Thus,  we do not use color 3 or 4 for $e$ and $e^\prime$ in Step~5.
 
 For even $\frac{n}{d}$ and  $d=4$, we have two probabilities; $t$ is odd or even. 
If $t$ is odd, then $f(Q_1)=0,1,2,1,0$ and $f(Q_2)=1,2,0,1,2$. Thus, $\{e,e^\prime\}=\{u_{7}u_{8},u_{\ell+7}u_{\ell+8}\}$, $f(e+5)=f(u_2,u_3)=1$, and $f(e^\prime+5)=f(u_{\ell+2},u_{\ell+3})=1$.
If $t$ is even, then $f(Q_1)=2,1,0,2,0$ and $f(Q_2)=0,1,2,0,2$. Then,  $\{e,e^\prime\}=\{u_{\ell-5}u_{\ell-4},u_{n-5}u_{n-4}\}$, $f(e+5)=f(u_{0},u_{1})=1$, and $f(e^\prime+5)=f(u_{\ell},u_{\ell+1})=1$.

Consequently,  $\{f(e+3),f(e+4)\}=\{f(e^\prime+3),f(e^\prime+4)\}$ for $d=3$ and $f(e+5)=f(e^\prime+5)$, for $d=4$. Now, it suffices to show that $f(e)\neq f(e^\prime)$ to prove condition $\rm{(i)}$ either or $\rm{(ii)}$ holds for at most one of the last connectors. Note that
 \begin{itemize}
 \item[\rm{(a)}] if $e=u_{2d-1}u_{2d}$ and $e^\prime=u_{\ell+2d-1}u_{\ell+2d}$, then $f(e)\in\{0,1,2\}\setminus\mathcal{F}_{C^1}(v^1_{2t})$ and $f(e^\prime)\in\{0,1,2\}\setminus\mathcal{F}_{C^1}(v^1_{2t-1})$. 
 \item[\rm{(b)}]
 if  $e=u_{\ell-d-1}u_{\ell-d}$ and $e^\prime=u_{n-d-1}u_{n-d}$, then $f(e)\in\{0,1,2\}\setminus\mathcal{F}_{C^1}(v^1_{\frac{n}{d}-1-t})$ and $f(e^\prime)\in\{0,1,2\}\setminus\mathcal{F}_{C^1}(v^1_{\frac{n}{d}-t})$.
 \end{itemize} 
 If $f(e)=f(e^\prime)$, then we should have $\mathcal{F}_{C^1}(v^1_{2t})= \mathcal{F}_{C^1}(v^1_{2t-1})$ in case (a) and $\mathcal{F}_{C^1}(v^1_{\frac{n}{d}-1-t})=\mathcal{F}_{C^1}(v^1_{\frac{n}{d}-t})$ in case (b). 
 Considering the edge coloring of $C^1$ in Step~2 by Lemma~\ref{lem1}, if $\mathcal{F}_{C^1}(v^1_{2t})= \mathcal{F}_{C^1}(v^1_{2t-1})$ and $\mathcal{F}_{C^1}(v^1_{\frac{n}{d}-1-t})=\mathcal{F}_{C^1}(v^1_{\frac{n}{d}-t})$, then 
 $2t\in\{0,1,t-1,t,t+1,\frac{n}{d}-1\}$, which is not possible.
Therefore, colorings of  $C^1$ and $C^d$ cause  we  use colors of $\{3,4\}$ for   $e$ either or  $e^\prime$ in Step~5. This proves Claim.

Now, we are ready to provide a 3-star edge coloring for every $C^i$, $2\leq i\leq d-1$. If   the star edge coloring $C^0$ does not  use any color of $\{3,4\}$, then there is no additional constraint on the star edge coloring of $C^i$, $2\leq i\leq d-1$. 
 In this case, since there exists at most one colored edge in $C^i$, $2\leq i\leq d-1$,  it is easy to see that the partial edge coloring of   $C^i$ can be extended to a 3-star edge coloring with colors of $\{0,1,2\}$. Thus, assume that we have  a connector $u_iu_{i+1}$ with color 3 or 4 from 
\[\{u_{n-1}u_0, u_{d-1}u_d, u_{\ell+d-1}u_{\ell+d}\}~~\text{(see Table~\ref{EC})}\]
or
\[\{u_{\ell-d-1}u_{\ell-d}, u_{n-d-1}u_{n-d},u_{2d-1}u_{2d},u_{\ell+2d-1}u_{\ell+2d}\}~~ \text{(for~$d\in\{3,4\}$ in Step~5)}.\] 
%Thus, 
%Without loss of generality, assume that
% $f(e_1=u_{r_1}u_{r_1+j}),f(e_2=u_{r_2}u_{r_2+j})\in\{3,4\}$.
Thus, spokes $u_{i-1}v^{d-1}_{t(\frac{i+1}{d}-1)}$ and $u_{i+2}v^{2}_{t\frac{i+1}{d}}$  are in distance two from $u_iu_{i+1}$ and have the same color as $f(u_iu_{i+1})$. To avoid any bicolored 4-path containing  bicolored 3-path $u_i,u_{i+1},u_{i+2},v^{2}_{t\frac{i+1}{d}}$ or $v^{d-1}_{t(\frac{i+1}{d}-1)},u_{i-1},u_i,u_{i+1}$, we should have
\begin{enumerate}
\item[$\bullet$] $f(u_{i-1}u_{i})\not \in\mathcal{F}_{C^{d-1}}(v^{d-1}_{t(\frac{i+1}{d}-1)})$,
\item[$\bullet$] $f(u_{i+1}u_{i+2})\not \in\mathcal{F}_{C^2}(v^{2}_{t\frac{i+1}{d}})$,
%\item[$\bullet$] $f(u_{i-1}u_{i})\neq f(u_{i+1}u_{i+2})$.
\end{enumerate}
To satisfy the above conditions, we properly color the adjacent edges to $v^{2}_{t\frac{i+1}{d}}$ and $v^{d-1}_{t(\frac{i+1}{d}-1)}$ using colors of  $\{0,1,2\}\setminus\{f(u_{i+1}u_{i+2})\}$ and $\{0,1,2\}\setminus \{f(u_{i-1}u_{i} )\}$, respectively. 
 Now, using Lemmas~\ref{lem2}~and~\ref{lem3}, we extend the partial star edge colorings of $C^2$ and $C^{d-1}$ to a 3-star edge coloring using colors $\{0,1,2\}$.
}
\end{proof}

\begin{theorem}
Let  $n\geq 2k$, $GCD(n,k)=2$, and $t$ be the minimum positive integer which $tk=2\pmod{n}$.  Then, $GP(n,k)$ admits a 5-star edge coloring in the following cases.
\begin{itemize}
\item[\rm{(}$1$\rm{)}]
$n = 0 \pmod 6$.
\item[\rm{(}$2$\rm{)}]
$n=2\pmod 6$, $t=2\pmod 3$.
\item[\rm{(}$3$\rm{)}]
$n=4\pmod 6$, $t=1\pmod 3$.
\end{itemize}
\end{theorem}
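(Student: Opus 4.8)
The plan is to adapt the six-step scheme of Theorem~\ref{th:main} to the degenerate case $d=2$, where $GP(n,k)$ consists of the outer cycle $C^0$ of length $n$ together with exactly two inner cycles $C^1,C^2$, each of length $m:=\frac{n}{2}$, joined by spokes; since $k$ generates the even residues modulo $n$, the even vertices of $C^0$ are matched to $C^1$ and the odd vertices to $C^2$. As before I would reserve $\{3,4\}$ for the spokes and $\{0,1,2\}$ for the three cycles, permitting at most one outer edge to borrow a spoke color if the cyclic closure forces it. First I would color the spokes by the parity rule $f(s^i_r)=3+((i+r)\bmod 2)$. As in the proof of Theorem~\ref{th:main}, a $4$-path meeting a single spoke is automatically non-bicolored (it mixes a color of $\{3,4\}$ with colors of $\{0,1,2\}$), and for every non-connector outer edge the two incident spokes receive distinct colors; so the dangerous configurations reduce to a connector $u_{2r-1}u_{2r}$ together with its two adjacent spokes $s^1_r,s^2_{r-t}$, which share a color precisely when $t$ is odd, giving a potential bicolored $4$-path of the form inner edge, spoke, connector, spoke.

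Next I would color the two inner cycles. Give $C^1$ a $3$-star edge coloring with $\{0,1,2\}$ and define $C^2$ as the $t$-shift of $C^1$, so that the two forbidden color sets at the ends of any equal-colored connector coincide, exactly as in Steps~2 and~3 of Theorem~\ref{th:main}; by Lemma~\ref{lem1} such colorings exist, and because $GCD(t,m)=1$ the shift is compatible with the cyclic order in which the inner vertices attach to $C^0$. This is precisely where the hypotheses enter: a period-$3$ pattern closes cleanly around a length-$m$ cycle when $m\equiv 0\pmod 3$ (the case $n\equiv0\pmod6$), whereas for $m\equiv1$ or $2\pmod3$ (the cases $n\equiv2,4\pmod6$) one must use the correction patterns of Lemma~\ref{lem1}, and these corrections can be threaded through the $t$-shift without creating a bicolored $4$-path only when $t$ lies in the stated residue class modulo $3$.

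Having fixed the spokes and the two inner cycles, I would finish by coloring $C^0$. For each connector whose adjacent spokes share a color I would choose its color from $\{0,1,2\}$ avoiding $\mathcal{F}_{C^1}(v^1_r)\cup\mathcal{F}_{C^2}(v^2_{r-t})$, which is feasible because the $t$-shift makes these two sets equal and they lie inside $\{0,1,2\}$ (the analogue of condition~(\ref{eq1})). I would then extend to a full $3$-star edge coloring of $C^0$ by cutting it at the connectors into short subpaths and applying Lemma~\ref{lem2}, Lemma~\ref{lem3}, and Processes~1 and~2 between consecutive connectors, spending color $3$ or $4$ on at most a single outer edge when the cyclic closure demands it. Note that, unlike Theorem~\ref{th:main}, there are no intermediate cycles $C^i$ with $2\le i\le d-1$, so the analogue of Step~6 is vacuous.

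The main obstacle is the simultaneous cyclic closure: the period-$3$ coloring must close around the length-$n$ outer cycle while respecting the forced connector colors and the $t$-shift relating $C^1$ and $C^2$. With $d\ge 3$ the extra inner cycles gave enough slack to absorb the exceptional edge and to close in all residue classes, which is why Theorem~\ref{th:main} had no arithmetic restriction; with only two inner cycles this slack disappears, and the three congruence conditions on $(n\bmod 6,\,t\bmod 3)$ are exactly the constraints under which the inner-cycle corrections, the parity of the spokes, and the closure of $C^0$ become mutually consistent. I expect the proof to be organized, like Theorem~\ref{th:main}, as an explicit case analysis on $n\bmod 6$ and $t\bmod 3$ with tabulated boundary patterns, the verification that at most one edge of $C^0$ receives a color from $\{3,4\}$ being the technical heart.
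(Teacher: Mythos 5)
Your outline reproduces the scaffolding of Theorem~\ref{th:main} (parity-colored spokes, a $t$-shifted pair of inner cycles, condition~(\ref{eq1}) for connectors whose spokes agree), but it misses the structural feature that makes $d=2$ genuinely different, and that feature is precisely where the congruence hypotheses must be used. When $d=2$, every other edge of $C^0$ is a connector, and (apart from wrap-around effects when $\frac{n}{2}$ is odd) for odd $t$ \emph{every} connector has its two adjacent spokes colored alike; Step~3 therefore forces the color of half of the edges of $C^0$: the connector adjacent to $s^1_i$ must get the unique color of $\{0,1,2\}\setminus\mathcal{F}_{C^1}(v^1_{i})$. The uncolored edges of $C^0$ are then \emph{single} edges wedged between two forced connectors, so your plan to ``cut $C^0$ at the connectors into short subpaths'' and finish with Lemma~\ref{lem2}, Lemma~\ref{lem3} and Processes~1 and~2 is structurally inapplicable: those tools need long uncolored stretches, which exist when $d\geq 3$ but not here. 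What the argument actually requires --- and what you never formulate --- is that the forced colors of \emph{consecutive} connectors be (almost always) distinct, equivalently $\mathcal{F}_{C^1}(v^1_i)\neq\mathcal{F}_{C^1}(v^1_{i+t})$ for (almost) all $i$. Constructing explicit $3$-star colorings of $C^1$ with this property is the real content: with a periodic $0,1,2$ pattern it holds exactly when $t\neq 0\pmod 3$, which is automatic when $3\mid\frac{n}{2}$ (the case $n=0\pmod 6$), while for $n=2,4\pmod 6$ the block that breaks the period creates exceptional indices whose location depends on $t\bmod 3$, and the finitely many resulting pairs of equal-colored consecutive connectors must then be repaired explicitly (this is the paper's ``if $f(e)=f(e+2)$ then arrange $f(e-2)=f(e+4)$'' fix in its Cases (1)--(2), and the spoke-recoloring fix with $f(e)=0$, $f(e^\prime)=1$ in its Case (3)). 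Your sentence that the corrections ``can be threaded through the $t$-shift \dots only when $t$ lies in the stated residue class'' restates the theorem rather than proving it, and your closing paragraph explicitly defers this ``technical heart''; as written, the proposal is a plan, not a proof.

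There is also a concrete false claim: the adjacent spokes of a connector do \emph{not} share a color ``precisely when $t$ is odd''. Spoke indices live modulo $\frac{n}{2}$, and when $\frac{n}{2}$ is odd the wrap-around flips parities. For example, in $GP(14,4)$ one has $t=4$ even, yet the connectors $u_{13}u_0$, $u_3u_4$, $u_7u_8$, $u_{11}u_{12}$ all have same-colored adjacent spokes; conversely, for odd $t$ with $\frac{n}{2}$ odd some connectors do not. This is why the paper's proof distinguishes even and odd $t$ when coloring $P_1=u_{n-1},u_0,u_1,u_2$ and $P_2=u_{\ell-1},u_\ell,u_{\ell+1},u_{\ell+2}$, and why the analogue of condition~(\ref{const-2}) must be imposed when $\frac{n}{2}$ is odd; none of this case analysis is recoverable from your outline.
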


\begin{proof}
{ Since $GCD(n,k)=2$, we have three disjoint cycles $C^0$, $C^1$ and $C^2$, which the length of $C^0$ is $n$ and the length of  $C^1$ and $C^2$ is $\frac{n}{2}$. Moreover, the distance of every two consecutive connectors is two.
We color the spokes of $GP(n,k)$ applying Step~1 of Theorem~\ref{th:main}.
For every $1\leq i\leq \frac{n}{d}$ and $j\in\{1,2\}$,  the ends of every spokes $s^j_i$ and $s^j_{i+t}$ on $C^0$, are in distance three. Furthermore, two spokes $s^0_i$ and $s^1_{i-t}$ are adjacent to the different ends of  a common connector edge.

We  provide a 3-star edge coloring of  $C^1$  with colors of $\{0,1,2\}$ and  then we color $C^2$ in a anticlockwise $t$-shift pattern of  $C^1$.
Thus, in each case of the theorem we just determine the coloring pattern of $C^1$. After coloring of $C^1$, we should color $C^0$ such that  $G(n,p)$ has no bicolored 4-path.  For this purpose, we apply Step~3 of Theorem~\ref{th:main} to color the connectors  adjacent to spokes with the same color.  Note that, for even $t$, connectors $u_{n-1}u_0$ and $u_{\ell+1}u_{\ell+2}$, and for odd $t$, connectors $u_1u_2$ and $u_{\ell-2}u_{\ell-1}$, are adjacent to the spokes with the same color. Moreover, if $\frac{n}{2}$ is odd, then we should satisfy conditions~(\ref{const-2}) in Step~4 of Theorem~\ref{th:main} for coloring of the edges incident to $\{u_0,u_1,u_\ell,u_{\ell+1}\}$ to have no bicolored 4-path. Thus, we first color $C^1$ and then  $P_1=u_{n-1},u_0,u_1,u_2$ and $P_2=u_{\ell-1},u_\ell,u_{\ell+1},u_{\ell+2}$ as follows.

\noindent{\bf Case \rm{(}1\rm{)}}:  $n = 0 \pmod 6$.\\
 Since $GCD(t,\frac{n}{2})=1$,   we have $t\neq 0\pmod 3$. and color $C^1$ such that
%$$f(C_1)=\underbrace{\underline{ 0,1,2},\ldots,\underline{0,1,2}}_\frac{n}{2}.$$
\[~~f(C^1)=
\begin{cases}
2,\underbrace{\underline{ 1,2,0},\ldots,\underline{1,2,0}}_{\frac{n}{2}-3},1,2~~~ &t=1\pmod3,\\
0,\underbrace{\underline{ 1,2,0},\ldots,\underline{1,2,0}}_{\frac{n}{2}-3},2,1~~~ &t=2\pmod3.
\end{cases}
\]
Thus, for every $0\leq i\leq \frac{n}{d}-1$, where $i\neq 1, \frac{n}{2}-1-t$, we have  $\mathcal{F}(v^1_i)\neq \mathcal{F}(v^1_{i+t})$.
After applying Step~3 of Theorem~\ref{th:main}, for $i\in\{1,\frac{n}{2}-1-t\}$,  $f(u_{ki-1}u_{ki})\in \{0,1,2\}\setminus\mathcal{F}(v^1_i)$ either or $f(u_{ki-1}u_{ki}+2)\in \{0,1,2\}\setminus\mathcal{F}(v^1_{i+t})$.  

Now, for $t=1\pmod 3$, we set
\[f(P_1)=\begin{cases}
1,2,0~~~&t~\text{is even},\\
0,1,2~~~ &t~\text{is odd}.\\
\end{cases}~~~~
f(P_2)=\begin{cases}
0,2,1~~~&t~\text{is even}\\
2,1,0~~~ &t~\text{is odd}.
\end{cases}
\]

\noindent For $t=2\pmod 3$, we set
\[f(P_1)=\begin{cases}
2,0,1~~~&t~\text{is even},\\
1,2,0~~~ &t~\text{is odd}.
\end{cases}
~~~~f(P_2)=\begin{cases}
1,0,2~~~&t~\text{is even},\\
0,2,1~~~ &t~\text{is odd}.
\end{cases}
\]                  
%Note that in this case $t\neq 0\pmod 3$, since $GCD(t,\frac{n}{2})=1$.
%In this case, for every $0\leq i\leq \frac{n}{2}-1$, $\mathcal{F}(v^1_i)\neq \mathcal{F}(v^1_{i+t})$.
\noindent{\bf Case \rm{(}$2$\rm{)}}: $n=2\pmod 6$, $t=2\pmod 3$.\\
In this case we set $f(C^1)=\underbrace{\underline{0, 1,2},\ldots,\underline{0,1,2}}_{\frac{n}{2}-1},1$.
Hence, for every $0\leq i\leq \frac{n}{d}-1$, $\mathcal{F}(v^1_i)\neq \mathcal{F}(v^1_{i+t})$. Now, we color $P_1$ and $P_2$ as follows.
\[f(P_1)=\begin{cases}
2,0,1~~~&t~\text{is even},\\
1,2,0~~~ &t~\text{is odd}.
\end{cases}
~~~~
f(P_2)=\begin{cases}
2,1,0~~~&t~\text{is even},\\
0,1,2~~~ &t~\text{is odd}.
\end{cases}
\]    

In Cases ($1$) and ($2$), because of coloring of $P_1$ and $P_2$, it is possible to have two consecutive connectors $e$ and $e+2$ with the same color. If $f(e)=f(e+2)$, then  $f(e-2)=f(e+4)$ and we could extend this partial edge coloring of   $e-2,e-1,...,e+4$ to a 3-star edge coloring of it. Note that, if $e-2$ either or $e+4$ are not colored, we determine its color such that the condition $f(e-2)=f(e+4)$ holds.

\noindent{\bf Case \rm{(}$3$\rm{)}}: $n=4\pmod 6$, $t=1\pmod 3$.
$$f(C_1)=\underbrace{\underline{ 0,1,2},\ldots,\underline{0,1,2}}_{\frac{n}{2}-5},0,1,0,2,1.$$
In this case, for every $0\leq i\leq \frac{n}{d}-1$ and $i\not\in\{0, \frac{n}{2}-t,\frac{n}{2}-3,\frac{n}{2}-3-t\}$, $\mathcal{F}(v^1_i)\neq \mathcal{F}(v^1_{i+t})$. 
 Let $e=u_0u_{n-1}$ and $e^\prime=u_{k(\frac{n}{2}-3)-1}u_{k(\frac{n}{2}-3)}$. Then,  at least  two consecutive  connectors of $\{e-2,e,e+2\}$ and $\{e^\prime-2,e^\prime,e^\prime+2\}$ are colored with 2 after applying Step~3 of Theorem~\ref{th:main}. Let the color of spokes adjacent to $e$ and $e^\prime$ are $c$ and $c^\prime$, respectively.
We change the color of spokes adjacent to $e$ and $e^\prime$ to 2 and set $f(e)=0$, $f^\prime(e^\prime)=1$. Then, we choose color 2 for $\{e-2,e+2,e^\prime-2, e^\prime+2\}$, color $c$ for $\{e-1,e+1\}$, and color $c^\prime$ for $\{e^\prime-1,e^\prime+1\}$.

Now, in each cases of the theorem, we color remaining uncolored connectors such that every two consecutive connectors have different colors. After that, it is easy to complete the star edge coloring of $C^0$ by choosing the color of each remaining edge differently from the colors of its adjacent connectors.  
}
\end{proof}

%%%%%%%%%%%%%%%%%%%%%%%%%%%%%%%%%%%%%%%%%%%%%%%%%%%%%%%%%%%%%%%%%%%
%\bibliographystyle{abbrv}
%\bibliography{bibref}
{\setlength{\baselineskip}{0.73\baselineskip}

}
\end{document}